\theoremstyle{plain}
\newtheorem{theorem}{Theorem}
\newtheorem{proposition}[theorem]{Proposition}
\theoremstyle{definition}
\newtheorem{definition}[theorem]{Definition}
\theoremstyle{remark}
\DeclareMathOperator{\Cov}{Cov}
\DeclareMathOperator{\E}{\mathbb E}
\DeclareMathOperator{\Vers}{Vers}
\newcommand{\KL}[2]{\operatorname{D}\left(#1\,\Vert#2\right)}
\newcommand{\absoluteval}[1]{\left|#1\right|}
\newcommand{\condexp}[2]{\E\left(#1 \left\vert #2 \right.\right)}
\newcommand{\covat}[3]{\Cov_{#1}\left(#2,#3\right)}
\newcommand{\densities}{\mathcal P_{\ge}}
\newcommand{\derivby}[1]{\frac{d}{d#1}}
\newcommand{\eBspace}[1]{B_{#1}}
\newcommand{\etransport}[2]{\prescript{\text{e}}{} {\mathbb U} _ {#1} ^ {#2}}
\newcommand{\euler}{\mathrm{e}}
\newcommand{\expectat}[2]{{\E}_{#1}\left[#2\right]}
\newcommand{\expectof}[1]{\E\left(#1\right)}
\newcommand{\expof}[1]{\exp\left(#1\right)}
\newcommand{\integrald}[3]{\int_{#1} {#2} \ {#3}}
\newcommand{\integrali}[4]{\int_{#1}^{#2} {#3} \ d{#4}}
\newcommand{\lnof}[1]{\ln\left(#1\right)}
\newcommand{\mBspace}[1]{\prescript{*}{}{B}_{#1}}
\newcommand{\maxexp}[1]{{\mathcal E}\left(#1\right)}
\newcommand{\mcoverat}[1]{\prescript{*}{}{\mathcal U}_{#1}}
\newcommand{\mtransport}[2]{\prescript{\text{m}}{} {\mathbb U} _ {#1} ^ {#2}}
\newcommand{\nonnegreals}{\reals_{\ge}}
\newcommand{\normat}[2]{\left\Vert#2\right\Vert_{#1}}
\newcommand{\pdensities}{\mathcal P_>}
\newcommand{\reals}{\mathbb R}
\newcommand{\scalarat}[3]{\left\langle#2,#3\right\rangle_{#1}}
\newcommand{\scalarof}[2]{\left\langle#1,#2\right\rangle}
\newcommand{\sdensities}{\mathcal P_{1}}
\newcommand{\sdomain}[1]{\mathcal S_{#1}}
\newcommand{\setof}[2]{\left\{#1 \colon #2 \right\}}
\newcommand{\set}[1]{\left\{#1\right\}}
\newcommand{\transport}[2]{\mathbb U_{#1}^{#2}}
\newcommand{\versof}[1]{\Vers\left(#1\right)}
\address[1]{%
de Castro Statistics Initiative, Collegio Carlo Alberto, Via Real Collegio 30, 10024 Moncalieri, Italy}
\abstract{%
\nolinenumbers
We review a nonparametric version of Amari's Information Geometry in which the set of positive probability densities on a given sample space is endowed with an atlas of charts to form a differentiable manifold modeled on Orlicz Banach spaces. This nonparametric setting is used to discuss the setting of typical problems in Machine Learning and Statistical Physics, such as relaxed optimization, Kullback-Leibler divergence, Boltzmann entropy, Boltzmann equation.}
\begin{document}
\nolinenumbers

\section{Introduction}

Information Geometry was developed in the seminal monograph by Amari and Nagaoka \cite{amari|nagaoka:2000}, where previous---essentially metric---descriptions of probabilistic and statistics concepts are extended in the direction of differential geometry, including the fundamental treatment of differential connections. The differential geometry involved in their construction is finite dimensional and the formalism is based on coordinate systems. Following a suggestion by Phil Dawid in \cite{dawid:75,dawid:1977AS}, a particular nonparametric version of the Amari-Nagaoka theory was developed in a series of papers \cite{pistone|sempi:95,pistone|rogantin:99,gibilisco|pistone:98,gibilisco|isola:1999,cena:2002,cena|pistone:2007,imparato:thesis,pistone:2009EPJB,pistone:2009SL,pistone:2013GSI2013}, where the set $\pdensities$ of all strictly positive probability densities of a measure space is shown to be a Banach manifold (as defined in \cite{bourbaki:71variete,abraham|marsden|ratiu:1988,lang:1995}) modelled on an Orlicz Banach space (see e.g. \cite[Ch II]{musielak:1983}). 

Specifically, Gibbs densities $q = \euler^{u-K_p(u)} \cdot p$, $\expectat p u = 0$, are represented by the chart $s_p \colon q \mapsto u$. Because of the exponential form, the random variable $u$ is required to belong to an exponential Orlicz space, which is similar to an ordinary Lebesgue spaces, but lacks some important features of these spaces, such as reflexivity and separability. On the other side, the nonparametric setting emphasizes in a nice way the fact that statistical manifolds are actually affine manifolds with an Hessian structure, see \cite{shima:2007}. 

Such a formalism has been frequently criticised as unnecessarily involved to be of use in practical applications and also lacking really new results with respect to the Amari-Nagaoka theory. However, it should be observed that most applications in Statistical Physics, such as Boltzmann equation theory \cite{villani:2002review}, are intrinsically nonparametric. I like to quote here a line by Serge Lang in \cite[p. vi]{lang:1995}: ``One major function of finding proofs valid in the infinite dimensional case is to provide proofs which are especially natural and simple in the finite dimensional case.''

This paper is organized as follows. Sec.s \ref{sec:model-spaces} and \ref{exponentialmanifold} are a review of the basic material on statistical exponential manifolds with some emphasis on the functional analytic setting and on second order structures. Sec. \ref{sec:applications} contains a discussion of examples of application to the differential geometry of expected values, Kullback-Leibler divergence, Boltzmann entropy, Boltzmann equation. Sec. \ref{sec:conclusion} presents some among the topics that would require a further study, together with some lines of current research.

\section{Model spaces.}
\label{sec:model-spaces}

Given a $\sigma$-finite measure space $(\Omega, \mathcal F, \mu)$, we denote by $\pdensities$ the set of all densities which are positive $\mu$-a.s, by $\densities$ the set of all densities, by $\sdensities$ the set of measurable functions $f$ with $\int f\ d\mu = 1$. In the finite state space case $\sdensities$ is an affine subspace, $\densities$ is the simplex, $\pdensities$ its topological interior. We summarize below the basic notations and results. Missing proof are to be found e.g. in \cite{cena|pistone:2007} and in \cite[Ch II]{musielak:1983}.

If both $\phi$ and $\phi_*$ are monotone, continuous functions on $\nonnegreals$ onto itself such that $\phi^{-1}=\phi_*$, we call the pair

\begin{equation*}
  \Phi(x) = \int_0^{\absoluteval x} \phi(u) \ du, \quad \Phi_*(y) = \int_0^{\absoluteval y} \phi_*(v) \ dv,
\end{equation*}
a \emph{Young pair}. Each Young pair satisfies the Young inequality

\begin{equation}
  \label{eq:Young-inequality}
  \absoluteval{xy} \le \Phi(x) + \Phi_*(y)
\end{equation}
with equality if, and only if, $y = \phi(x)$. The relation in a Young pair is symmetric and either element is called a Young function. We will use the following Young pairs:

\begin{equation}\label{eq:Young-pairs}
  \begin{array}{l|c|c|c|c}
   & \phi_* & \phi=\phi_*^{-1} & \Phi_* & \Phi\\
\hline
\text{(a)} & \lnof{1+u} & \euler^v - 1 & (1+\absoluteval x)\lnof{1+\absoluteval x} - \absoluteval x & \euler^{\absoluteval y} - 1 - \absoluteval y \\
\text{(b)} & \sinh^{-1} u & \sinh v & \absoluteval x \sinh^{-1}\absoluteval x - \sqrt{1+x^2} + 1 & \cosh y - 1 \end{array}
\end{equation}
Let us derive a few elementary but crucial inequalities. If $x \ge 0$

\begin{equation}\label{eq:phistarsecond}
\Phi_*^{\text{(a)}}(x) = \integrali 0 x {\frac{x-u}{1+u}} {u} , \quad \Phi_*^{\text{(b)}}(x) = \integrali 0 x {\frac{x-u}{\sqrt{1+u^2}}} {u}, 
\end{equation}
hence, as $\sqrt{1+u^2} \le 1+ u \le \sqrt 2 \sqrt{1+u^2}$ if $u \ge 0$, for all real $x$ we have

\begin{equation}
  \label{eq:equivalent-Young-star}
  \Phi_*^{\text{(a)}}(x) \le \Phi_*^{\text{(b)}}(x) \le \sqrt 2 \Phi_*^{\text{(a)}}(x). 
\end{equation}
From \eqref{eq:phistarsecond} we have for $a > 1$

\begin{equation}\label{eq:delta2}
  \Phi_*^{\text{(a)}}(ax) = a^2 \integrali 0 {x} {\frac{x-v}{1+av}} {v} \le a^2 \Phi_*^{\text{(a)}}(x), \quad  \Phi_*^{\text{(b)}}(ax) = a^2 \integrali 0 {x} {\frac{x-v}{\sqrt{1+a^2v^2}}} {v} \le a^2 \Phi_*^{\text{(b)}}(x).
\end{equation}
In a similar way, from

\begin{equation*}
\Phi^{\text{(a)}}(y) = \integrali 0 y {(y-v)\euler^v} {v} , \quad \Phi^{\text{(b)}}(y) = \integrali 0 y {(y-v)\cosh v} {v}
\end{equation*}
and $\cosh v \le \euler^v \le 2\cosh v$ if $v \ge 0$ we have a relation similar to \eqref{eq:equivalent-Young-star}, that is for all $y$

\begin{equation}
  \label{eq:equivalent-Young}
  \Phi^{\text(b)}(y) \le \Phi^{\text(a)}(y) \le 2\Phi^{\text(b)}(y). 
\end{equation}
Property \eqref{eq:phistarsecond} does not hold in this case. Such an inequality is called $\Delta_2$-condition and has an important role in the theory of Orlizc spaces.

If $\Phi$ is any Young function, a real random variable $u$ belongs to the Orlicz space $L^{\Phi}(p)$ if $\expectat p {\Phi(\alpha v)} < +\infty$ for some $\alpha > 0$. A norm is obtained by defining the set $\setof{v}{\expectat p {\Phi(v)} \le 1}$ to be the closed unit ball. It follows that the open unit ball consists of those $u$'s such that $\alpha u$ is in the closed unit ball for some $\alpha > 1$. The corresponding norm $\Vert \cdot \Vert_{\Phi, p}$ is called Luxemburg norm and defines a Banach space, see e.g. \cite[Th 7.7]{musielak:1983}. From \eqref{eq:equivalent-Young} and \eqref{eq:equivalent-Young-star} follows that cases (a) and (b) in \eqref{eq:Young-pairs} define equal vector spaces with equivalent norms, , see \cite[Lemma 1]{cena|pistone:2007}, therefore we drop any mention of them. 

The Young function $\cosh -1$ has been chosen here because the condition $\expectat p {\Phi(\alpha v)} < +\infty$ is clearly equivalent to $\expectat p {\euler^{t v}} < +\infty$ for $ t \in [-\alpha,\alpha]$, that is the random variable $u$ has a Laplace transform around 0. The case of a moment generating function defined on all of the real line is special and define a notable subspace of the Orlicz space. The use of such space has been proposed by \cite{grasselli:2001}.

There are technical issues in working with Orlicz spaces such as $L^{(\cosh -1)}(p)$, in particular the regularity of its unit sphere $S_{\cosh-1} = \setof{u}{\normat {(\cosh-1),p} u = 1}$. In fact, while $\expectat p {\cosh u - 1} = 1$ implies $u \in S_{\cosh-1}$, the latter implies only $\expectat p {\cosh u - 1} \le 1$. Subspaces of $L^{\Phi}(p)$ where it cannot happen at the same time $\normat {(\cosh-1),p} u = 1$ and $\expectat p {\cosh u - 1} < 1$ are of special interest. In general, the sphere $S_{\cosh-1}$ is not smooth, see an example in \cite[Ex. 3]{pistone:2013GSI2013}.

If the functions $\Phi$ and $\Phi_*$ are Young pair, for each $u \in L^{\Phi}(p)$ and $v \in L^{\Phi_*}(p)$, such that $\normat {\Phi,p} u, \normat {\Phi_*,p} v \le 1$, we have from the Young inequality \eqref{eq:Young-inequality} $\absoluteval{\expectat p {uv}} \le 2$, hence

\begin{equation*}
  L^{\Phi_*}(p) \times L^{\Phi}(p) \ni (v,u) \mapsto \scalarat p u v = \expectat p {uv}
\end{equation*}
is a duality pairing, $\absoluteval{\scalarat p u v} \le 2 \normat {\Phi_*,p} u \normat {\Phi,p} v$ . It is a classical result that in our case \eqref{eq:Young-pairs} the space $L^{\Phi_*}(p)$ is separable and its dual space is $L^{\Phi}(p)$, the duality pairing being $(u,v) \mapsto \scalarat p uv$. This duality extends to a continuous chain of spaces:

\begin{equation*}
  L^{\Phi}(p) \to L^a(p) \to L^b(p) \to L^{\Phi_*}(p), \quad 1 < b \le 2, \quad \frac1a+\frac1b=1
\end{equation*}
where $\to$ denotes continuous injection.

\subsection{Cumulant generating functional}

Let $p \in \mathcal \pdensities$ be given. The following theorem has been proved in \cite[Ch 2]{cena:2002}, see also \cite{cena|pistone:2007}.
\begin{proposition}\label{prop:expisanalytic}
\begin{enumerate}
\item
For $a \geq 1$, $n = 0, 1, \dots$ and $u \in L^\Phi(p)$,

\begin{equation*}
 \lambda_{a,n}(u) \colon \left(w_1, \dots, w_n \right)  \mapsto  \dfrac{w_1}{a} \cdots \dfrac{w_n}{a}\ \euler^{\frac ua}
\end{equation*}
is a continuous, symmetric, $n$-multi-linear map from $L^\Phi(p)$ to $L^{a}\left(p\right)$.%
\item
$v \mapsto \sum_{n = 0}^\infty \frac{1}{n!} \left(\dfrac{v}{a}\right)^n$ is a power series from $L^{\Phi}(p)$ to $L^a(p)$ with radius of convergence $\geq 1$.
\item The superposition mapping $ v \mapsto \euler^{v/a}$ is an analytic function from the open unit ball of $L^\Phi(p)$ to $L^a(p)$.
\end{enumerate}
\end{proposition}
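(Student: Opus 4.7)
The plan is to base everything on two ingredients: the generalised H\"older inequality, and the continuous embeddings $L^\Phi(p)\hookrightarrow L^r(p)$ for every finite $r$. Because $\Phi=\cosh-1$ dominates every polynomial, a sharp bound can be extracted: normalising $\normat{\Phi,p}{w}=1$ one has $\expectat p {\euler^{\absoluteval w}}\le 4$ (from $\euler^{\absoluteval x}\le 2\cosh x$ and the definition of the Luxemburg norm), whence the pointwise inequality $\absoluteval w^r\le r!\,\euler^{\absoluteval w}$ gives
\begin{equation*}
\normat{r,p}{w}\;\le\;(4\,r!)^{1/r}\,\normat{\Phi,p}{w}.
\end{equation*}
This factorial growth is what will match, via Stirling, the $n!$ denominator in the Taylor expansion and pin the radius of convergence to $1$.

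For (1), linearity in each $w_i$ and symmetry are immediate from the product form of the target. For continuity I pick $q\in(a,+\infty)$ small enough that $\euler^{u/a}\in L^q(p)$ (possible once $u$ has Luxemburg norm below $1$, which covers the range needed for (2)--(3)), choose $r$ from $n/r+1/q=1/a$, and split the product by multi-factor H\"older:
\begin{equation*}
\normat{a,p}{\lambda_{a,n}(u)(w_1,\dots,w_n)}\;\le\;a^{-n}\,\normat{q,p}{\euler^{u/a}}\prod_{i=1}^n\normat{nr,p}{w_i},
\end{equation*}
after which the embedding constant above converts each $L^{nr}$-norm into a multiple of $\normat{\Phi,p}{w_i}$. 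For (2), I specialise (1) to $u=0$, where the exponential factor disappears and
\begin{equation*}
\bigl\|\lambda_{a,n}(0)/n!\bigr\|_{\mathrm{op}}\;\le\;\frac{(4\,(na)!)^{1/a}}{a^{n}\,n!};
\end{equation*}
taking $n$-th roots and using $(m!)^{1/m}\sim m/\euler$, the right-hand side tends to $(na/\euler)/(a\cdot n/\euler)=1$, so the radius of convergence is at least $1$.

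Part (3) then follows from the general Banach-space principle (see e.g.\ \cite{abraham|marsden|ratiu:1988}) that a formal power series of continuous symmetric multilinear maps converges to a real-analytic function inside its open ball of convergence; the identification of the sum with the superposition $v\mapsto\euler^{v/a}$ is a pointwise verification against the scalar identity $\euler^x=\sum_n x^n/n!$. I expect the main technical obstacle to lie in the constant tracking of (2): any embedding bound inferior to the sharp factorial growth $(r!)^{1/r}\sim r/\euler$ would collapse the radius of convergence to $0$, so the estimates must be carried out quantitatively throughout in order not to lose the cancellation with Stirling.
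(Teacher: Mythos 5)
The paper does not prove this proposition in-text: it is quoted from \cite{cena:2002} and \cite{cena|pistone:2007}, so there is no internal proof to compare against; your argument is, however, essentially the one used in those sources, namely generalized H\"older combined with the embedding $L^\Phi(p)\hookrightarrow L^r(p)$ carrying the factorially sharp constant $\normat{r,p}{w}\le\left(4\,\Gamma(r+1)\right)^{1/r}\normat{\Phi,p}{w}\sim (r/\euler)\,\normat{\Phi,p}{w}$, which is exactly what makes the Cauchy--Hadamard limit equal to $1$. Your estimates check out; in particular the bound $\normof{\lambda_{a,n}(0)}\le a^{-n}\left(4\,\Gamma(na+1)\right)^{1/a}$ holds for the full multilinear norm and not only on the diagonal (apply H\"older with all $n$ factors in $L^{na}(p)$), so no polarization loss threatens the radius of convergence. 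Two small points. First, the exponent bookkeeping in part (1) is internally inconsistent: with each $w_i$ measured in $L^{nr}(p)$ the H\"older condition reads $1/r+1/q=1/a$, not $n/r+1/q=1/a$ (alternatively keep $n/r+1/q=1/a$ and use $\normat{r,p}{w_i}$); either choice works, since $q>a$ leaves room for a finite exponent. Second, your restriction to $\normat{\Phi,p}{u}<1$ in part (1) is not a defect of your proof but a necessary correction to the literal statement: for a general $u\in L^\Phi(p)$ one only knows $\expectat p{\euler^{\alpha\absoluteval u}}<+\infty$ for some possibly small $\alpha>0$, so already $\lambda_{a,0}(u)=\euler^{u/a}$ can fail to belong to $L^a(p)$; item (1) must be read, as in the cited source, with $u$ confined to (a neighborhood of) the closed unit ball, which is all that items (2) and (3) require.
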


The previous theorem provides an improvement upon the original construction of \cite{pistone|sempi:95}.

\begin{definition}
Let $\Phi = \cosh -1$ and $\eBspace p = L^\Phi_0(p) = \setof{u \in L^\Phi_0(p)}{\expectat p u = 0}$, $p \in \pdensities$. The \emph{moment generating functional} is $M_p  \colon L^{\Phi}(p) \ni u \mapsto \expectat p {\euler^u} \in \reals_> \cup \set {+\infty}$. The \emph{cumulant generating functional} is $K_p  \colon \eBspace p \ni u \mapsto \log M_p(u) \in \reals_> \cup \set{+\infty}$.
\end{definition}

The moment generating functional is the partition functional (normalizing factor) of the Gibbs model, $(\euler^u/M_p(u)) \cdot p \in \pdensities$ if $u \in L^{\Phi}(p)$, $M_p(u) < +\infty$. The same model is written $\euler^{u-K_p(u)}\cdot p$ if moreover $\expectat p u = 0$.

\begin{proposition} \label{prop:cumulant}\ 

\begin{enumerate}
\item \label{prop:cumulant1}$K_p (0) = 0$; otherwise, for each $u \neq 0$, $K_p (u) > 0$.
\item \label{prop:cumulant2}$K_p$ is convex and lower semi-continuous, and its proper domain is a convex set which contains the open unit ball of $\eBspace p$; in particular the interior the proper domain is a non empty open convex set denoted $\sdomain p$.
\item \label{prop:cumulant3}$K_p$ is infinitely G\^ateaux-differentiable in the interior of its proper domain.
\item \label{prop:cumulant4}$K_p$ is bounded, infinitely Fr\'echet-differentiable and analytic on the open unit ball of $\eBspace p$.
\end{enumerate}
\end{proposition}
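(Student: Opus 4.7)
The plan is to handle the four parts in order, using Proposition \ref{prop:expisanalytic} as the main analytic engine. Part (\ref{prop:cumulant1}) is immediate: $K_p(0) = \log \expectat p 1 = 0$, and for $u \in \eBspace p$ with $u \ne 0$ strict convexity of $\exp$ together with $\expectat p u = 0$ gives, by Jensen's inequality, $M_p(u) > 1$ and hence $K_p(u) > 0$. For part (\ref{prop:cumulant2}), convexity of $K_p$ follows from H\"older's inequality applied to $\euler^{tu + (1-t)v} = (\euler^u)^t(\euler^v)^{1-t}$, and lower semi-continuity from Fatou's lemma along an almost-surely convergent subsequence, which is available because the continuous injection $L^\Phi(p) \hookrightarrow L^2(p)$ recorded in the previous section promotes $\Phi$-norm convergence to convergence in probability. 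To see that the open unit ball of $\eBspace p$ sits inside the proper domain, I would fix $u$ with $\normat {\Phi,p} u < 1$, pick $\alpha > 1$ with $\normat {\Phi,p}{\alpha u} \leq 1$, use the closed-unit-ball characterization $\expectat p {\cosh(\alpha u) - 1} \leq 1$, and apply the pointwise estimate $\euler^x \leq 2\cosh(\alpha x)$ (valid for $\alpha \ge 1$) to conclude $M_p(u) \le 4$; this in fact gives the uniform bound $K_p(u) \le \log 4$ on the open unit ball, simultaneously settling the boundedness clause in part (\ref{prop:cumulant4}).

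Part (\ref{prop:cumulant4}) then reads off directly from Proposition \ref{prop:expisanalytic} with $a = 1$: the superposition $v \mapsto \euler^v$ is analytic from the open unit ball of $L^\Phi(p)$ into $L^1(p)$, and composing on the left with the continuous linear functional $\expectof{\cdot} \colon L^1(p) \to \reals$ makes $M_p$ analytic on the open unit ball of $\eBspace p$. Because Jensen gives $M_p \ge 1$ on $\eBspace p$, postcomposition with the analytic branch of $\log$ on $(0,+\infty)$ yields analyticity of $K_p$, from which infinite Fr\'echet-differentiability is automatic.

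Part (\ref{prop:cumulant3}) is the step I expect to be the main obstacle, since $\sdomain p$ may strictly contain the open unit ball and part (\ref{prop:cumulant4}) cannot be invoked directly. My plan is to translate to the origin via a change of reference density: given $u \in \sdomain p$, set $p_u = \euler^{u - K_p(u)} \cdot p \in \pdensities$ and exploit the identity $K_p(u + v) = K_p(u) + \expectat {p_u} v + K_{p_u}(v - \expectat {p_u} v)$ to transfer analyticity of $K_{p_u}$ at the origin (part (\ref{prop:cumulant4}) applied at $p_u$) back to infinite differentiability of $K_p$ in a neighbourhood of $u$. The delicate point is that $L^\Phi(p)$ and $L^\Phi(p_u)$ must coincide as vector spaces with equivalent norms for the transferred regularity to be expressed in the original chart; this equivalence is not purely functional-analytic but is the essence of the exponential connection developed in the next section. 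A more self-contained fallback, sufficient for G\^ateaux differentiability alone, is the one-variable route: for each $v \in \eBspace p$ the function $t \mapsto K_p(u + tv)$ is finite on an open interval around $0$ by convexity of $\sdomain p$, and classical cumulant-generating-function calculus on the real line supplies directional derivatives of every order, given explicitly by the central moments of $v$ under $p_u$.
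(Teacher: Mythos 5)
Your argument is correct and is essentially the proof given in the cited sources (Cena's thesis and Cena--Pistone), which is all the paper itself offers for this proposition: Jensen and H\"older for parts (\ref{prop:cumulant1})--(\ref{prop:cumulant2}), the uniform bound $K_p\le\log 4$ on the unit ball, Proposition \ref{prop:expisanalytic} with $a=1$ composed with $\expectat p {\cdot}$ and $\log$ for part (\ref{prop:cumulant4}), and the change-of-density identity $K_p(u+v)=K_p(u)+\expectat{p_u}{v}+K_{p_u}(v-\expectat{p_u}{v})$ (with the one-dimensional Laplace-transform fallback, correctly flagged to avoid circularity with Proposition \ref{prop:maxexp-pormanteaux}) for part (\ref{prop:cumulant3}). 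The only slip is cosmetic: the higher directional derivatives of $t\mapsto K_p(u+tv)$ are the cumulants of $v$ under $p_u$, not its central moments.
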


Other properties of the functional $K_p$ are described below as they relate directly to the exponential manifold.

\section{Exponential manifold}\label{exponentialmanifold}

The set of positive densities $\pdensities$ around a given $p \in \pdensities$ is modelled of the subspace of centered random variables in the Orlizc space $L^{\Phi}(p)$, hence it is crucial to discuss the isomorphism of the model spaces for different $p$'s.

\begin{definition}[Maximal exponential model: {\cite[Def 20]{cena|pistone:2007}}]
For $p \in \pdensities$ let $\sdomain p$ be the topological interior of the proper domain of the cumulant functional $K_p \colon \eBspace p$. The \emph{maximal exponential model} at $p$ is 

\begin{equation*}
  \maxexp p = \setof{\euler^{u - K_p(u)}\cdot p}{u \in \sdomain p}.
\end{equation*}
\end{definition}

\begin{definition}[Connected densities]\label{def:smile}
Densities $p,q \in \pdensities$ are connected by an open exponential arc, $p \smile q$, if there exists an open exponential family containing both, i.e. if for a neighborhood $I$ of $[0,1]$

\begin{equation*}
  \int p^{1-t}q^t \ d\mu = \expectat p {\left(\frac qp\right)^t} < +\infty, \quad t \in I.
\end{equation*}
\end{definition}

The following example is of interest for the applications in Sec. \ref{sec:applications}. Let $f_0$ be the standerd normal density on $\reals^N$ and $f$ a density, $f(x) \propto (1+\absoluteval x^a)f_0(x)$, $a > 0$. Then $\int (1+\absoluteval x^a)^t f_0(x) \ dx < + \infty$ for all real $t$, hence $ f_0 \smile f$.

\begin{proposition}[Characterization of a maximal exponential model: {\cite[Th 19 and 21]{cena|pistone:2007}}\label{prop:maxexp-pormanteaux}]
The following statement are equivalent:
\begin{enumerate}
\item \label{prop:maxexp-pormanteaux-1} $p, q \in \pdensities$ are connected by an open exponential arc, $p \smile q$;
  \item \label{prop:maxexp-pormanteaux-2} $q \in \maxexp p$;
  \item \label{prop:maxexp-pormanteaux-3} $\maxexp p = \maxexp q$;
  \item \label{prop:maxexp-pormanteaux-4} $\log \frac q p$ belongs to both $L^{\Phi}(p)$ and $L^{\Phi}(q)$;
  \item \label{prop:maxexp-pormanteaux-5} $L^{\Phi}(p)$ and $L^{\Phi}(q)$ are equal as vector spaces and their norms are equivalent.
\end{enumerate}
\end{proposition}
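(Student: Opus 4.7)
The plan is to close the cycle (1) $\Rightarrow$ (4) $\Rightarrow$ (5) $\Rightarrow$ (3) $\Rightarrow$ (2) $\Rightarrow$ (1), using the definitions and Proposition \ref{prop:cumulant}. The quick links come first: (3) $\Rightarrow$ (2) by taking $u = 0$, so that $q = \euler^{0 - K_q(0)} \cdot q \in \maxexp q = \maxexp p$; and (2) $\Rightarrow$ (1) by writing $q = \euler^{u - K_p(u)} \cdot p$ with $u \in \sdomain p$ and exploiting that $\sdomain p$ is open and convex (Proposition \ref{prop:cumulant}, item \ref{prop:cumulant2}), so the compact segment $\setof{tu}{t \in [0,1]}$ is contained in $\setof{tu}{t \in I} \subset \sdomain p$ for some open interval $I \supset [0,1]$; then $\expectat{p}{(q/p)^t} = \euler^{K_p(tu) - t K_p(u)} < +\infty$ on $I$.

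For (1) $\Rightarrow$ (4) I would pick $\alpha > 0$ small enough that $\pm\alpha$ lies in the interval on which $t \mapsto \expectat{p}{(q/p)^t}$ is finite, obtaining $\expectat{p}{\cosh(\alpha \log(q/p)) - 1} < +\infty$ and hence $\log(q/p) \in L^\Phi(p)$. The substitution $\expectat{q}{(p/q)^s} = \expectat{p}{(q/p)^{1-s}}$ shows that the relation $\smile$ is symmetric in $(p,q)$, so the same argument carried out at $q$ gives $\log(q/p) \in L^\Phi(q)$ as well.

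The main obstacle is (4) $\Rightarrow$ (5). The idea is that $\log(q/p) \in L^\Phi(p)$ forces a much stronger integrability of $q/p$: Proposition \ref{prop:expisanalytic} ensures that $v \mapsto \euler^{v/a}$ is analytic from the open unit ball of $L^\Phi(p)$ into $L^a(p)$ for every $a \ge 1$, so after rescaling $\log(q/p)$ into that ball one has $(q/p)^\rho \in L^a(p)$ for some $\rho > 1$ and some $a > 1$. For an arbitrary $v \in L^\Phi(p)$ I would then expand $\expectat{q}{\Phi(\beta v)} = \expectat{p}{(q/p) \Phi(\beta v)}$ and apply an Orlicz-H\"older inequality with a judiciously chosen $\beta > 0$ to deduce $\normat{\Phi,q}{v} \le C \normat{\Phi,p}{v}$; the symmetric hypothesis $\log(p/q) \in L^\Phi(q)$ yields the reverse bound. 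The delicate point is to calibrate the Luxemburg scaling constants and to control the conjugate of $\cosh - 1$ accurately enough for Young's inequality \eqref{eq:Young-inequality} to close.

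Finally, for (5) $\Rightarrow$ (3): equivalence of norms forces the centered subspaces $\eBspace p$ and $\eBspace q$ to coincide as topological vector spaces, and the finite function $\log(q/p)$ lies in both. Any $r = \euler^{u - K_p(u)} \cdot p \in \maxexp p$ can then be rewritten as $\euler^{v - K_q(v)} \cdot q$ with $v = u + \log(p/q) - \expectat{q}{u + \log(p/q)} \in \eBspace q$; the topological equivalence carries the interior $\sdomain p$ into $\sdomain q$, whence $\maxexp p \subset \maxexp q$, and symmetry gives equality.
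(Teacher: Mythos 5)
The paper itself offers no proof of this proposition---it is imported verbatim from \cite{cena|pistone:2007} (Th.\ 19 and 21)---so your attempt has to stand on its own. The short links are fine: (3)$\Rightarrow$(2) is immediate, in (2)$\Rightarrow$(1) the set $\setof{t}{tu\in\sdomain p}$ is indeed an open convex neighbourhood of $[0,1]$ and $\expectat p{(q/p)^t}=\euler^{K_p(tu)-tK_p(u)}$, and (1)$\Rightarrow$(4) correctly reduces $\expectat p{\cosh(\alpha\log(q/p))}$ to $\int p^{1\mp\alpha}q^{\pm\alpha}\,d\mu$. The first genuine gap is in (4)$\Rightarrow$(5), where you claim that ``rescaling $\log(q/p)$ into the open unit ball'' and Proposition \ref{prop:expisanalytic} give $(q/p)^\rho\in L^a(p)$ for some $\rho>1$. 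Rescaling goes the wrong way: with $u=\log(q/p)$ of large norm you must pass to $u/\lambda$, $\lambda>1$, and analyticity of $v\mapsto\euler^{v/a}$ then only yields integrability of $(q/p)^{1/\lambda}$ with $1/\lambda<1$, which is weaker than $q/p\in L^1(p)$. Indeed $u\in L^\Phi(p)$ alone guarantees $\expectat p{\euler^{tu}}<+\infty$ only on some $[-\alpha,\alpha]$ with possibly $\alpha<1$, and no exponent $>1$ can be extracted from it. The missing ingredient is the other half of hypothesis (4): $u\in L^\Phi(q)$ gives $\expectat q{\euler^{tu}}=\expectat p{\euler^{(1+t)u}}<+\infty$ for $\absoluteval t\le\beta$, and H\"older interpolation between the two intervals yields $\expectat p{\euler^{su}}<+\infty$ for all $s\in[-\alpha,1+\beta]$. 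This simultaneously proves (4)$\Rightarrow$(1) and delivers $q/p\in L^{1+\beta}(p)$, after which your Orlicz--H\"older estimate $\normat{\Phi,q}{v}\le C\normat{\Phi,p}{v}$ (using $\cosh^b(x)\le2\cosh(bx)$) does close. It would be cleaner to reorder the cycle so that (1) is available before you attack (5).

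The second gap is in (5)$\Rightarrow$(3): you invoke ``the finite function $\log(q/p)$ lies in both'' spaces, which is exactly statement (4) and is not at your disposal when (5) is the only standing hypothesis at that point of the cycle; as written the argument is circular. (Also, $\eBspace p$ and $\eBspace q$ do \emph{not} coincide even when $L^\Phi(p)=L^\Phi(q)$, since the centering is taken with respect to different measures; your recentering formula $v=u+\log(p/q)-\expectat q{u+\log(p/q)}$ is the right repair, but it again presupposes $\log(p/q)\in L^\Phi(q)$.) Deducing integrability of $\log(q/p)$ from the bare equality of the Orlicz spaces is the genuinely delicate implication in \cite{cena|pistone:2007}; you need either to supply that argument or to restructure the proof so that (5) is reached as a terminal consequence and the return from (5) to the other statements is handled by an independent argument.
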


\begin{definition}[Exponential manifold: \cite{pistone|sempi:95,pistone|rogantin:99,cena:2002,cena|pistone:2007}]
For each $p \in \pdensities$ define the charts

\begin{equation*}
  s_p \colon \maxexp p \ni q \mapsto \lnof{\frac qp} - \expectat p {\lnof{\frac qp}} \in \sdomain p \subset \eBspace,
\end{equation*}
with inverse

\begin{equation*}
  s_p^{-1} = e_p \colon \sdomain p \ni u \mapsto \euler^{u - K_p(u)} \cdot p \in \maxexp p \subset \pdensities.
\end{equation*}
The atlas $\setof{s_p \colon \sdomain p}{ p \in \pdensities}$ is affine and defines the \emph{exponential (statistical) manifold} $\pdensities$. 
\end{definition}

The affine manifold we have defined has a simple and natural structure because of Prop.  \ref{prop:maxexp-pormanteaux}. The domains $\maxexp p$, $\maxexp q$ of the charts $s_p$, $s_q$ are either disjoint or equal when $p \smile q$:

\begin{equation}
\label{eq:scheme1}
\xymatrix{%
\maxexp p \ar[r]^{s_p}\ar@{=}[d] & \sdomain p \ar[d]_{s_q\circ s_p^{-1}} \ar[r]^{I} & \eBspace p \ar[d]^{d(s_q\circ s_p^{-1})} \ar[r]^{I} & L^{\Phi}(p) \ar@{=}[d] \\ 
\maxexp q \ar[r]_{s_q} & \sdomain q \ar[r]_{I} & \eBspace q \ar[r]_{I} & L^{\Phi}(q)
}
\end{equation}

For ease of reference, various results from \cite{pistone|sempi:95,pistone|rogantin:99,cena:2002,cena|pistone:2007} are collected in the following proposition.  We assume $q = \euler^{u - K_p(u)} \cdot p \in \maxexp p$. Note that $K_p(u) = \expectat p {\lnof{p/q}} = \KL pq$.
\begin{proposition}\label{pr:misc}

  \begin{enumerate}
\item \label{item:firsttwo} The first three derivatives of $K_p$ on $\mathcal S_p$ are

\begin{align}
d K_p(u) v &= \expectat q v,  \label{eq:Kprime}\\
d^2 K_p(u)(v_1, v_2) &= \covat q {v_1}{v_2}. \label{eq:Kdoubleprime} \\
d^3 K_p(u)(v_1, v_2,v_3) &= \Cov_q(v_1,v_2,v_3). \label{eq:Ktripleprime}
\end{align}
\item The random variable $\frac q p -1$ belongs to $\mBspace p$ and

\begin{equation*}
  d {K_p(u)} v = \expectat p {\left( \frac q p - 1\right) v}.
\end{equation*}
In other words the gradient of $K_p$ at $u$ is identified with an element of $\mBspace p$, denoted by $\nabla K_p(u) = e^{u - K_p(u)} - 1=\frac q p -1$.
\item
The mapping $B_p \ni u\mapsto \nabla K_p(u) \in \mBspace p$ is monotonic, in particular one-to-one.
\item \label{item:weakderiv}
The weak derivative of the map $\mathcal S_p \ni u\mapsto \nabla K_p(u)
  \in \mBspace p$ at $u$ applied to $w \in B_p$ is given by

\begin{equation*}
  d (\nabla K_p(u)){w}= \frac q p \left( w- \expectat q w \right),
\end{equation*}
and it is one-to-one at each point.
\item \label{pr:misc-upq} The mapping $\mtransport pq : v \mapsto \frac pq \, v$ is an isomorphism of $\mBspace p$ onto $\mBspace q$.
\item \label{pr:misc-1} $q/p \in L^{\Phi_*}(p)$.
  \item \label{pr:misc-2} $D \left(q \Vert p \right) = D K_p(u)  u - K_p(u)$ with $q=\euler^{u-K_p(u)}p$,  in particular $- \KL q p < +\infty$. 
  \item  \label{pr:misc-3} $B_q$ is defined by an orthogonality property:

    \begin{equation*}
    B_q = L_0^{\Phi}(q) = \setof{u \in L^{\Phi}(p)}{ \expectat p {u \frac qp}  = 0}.  
    \end{equation*}
  \item \label{pr:misc-5} $\etransport pq \colon u \mapsto u - \expectat q u$ is an isomorphism  of $B_p$ onto $B_q$.
  \end{enumerate}
\end{proposition}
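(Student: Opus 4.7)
The plan is to dispatch the nine items essentially in order, reducing each to a direct computation supported by the analyticity of $K_p$ (Proposition \ref{prop:cumulant}, item \ref{prop:cumulant4}) and the norm-equivalence from Proposition \ref{prop:maxexp-pormanteaux}.

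For items 1 and 2, I would exploit Fréchet analyticity of $K_p$ on the open unit ball: differentiating $K_p(u+tv) = \lnof{\expectat p {\euler^{u+tv}}}$ in $t$ at $t=0$ yields $dK_p(u)v = \expectat p{v \cdot q/p} = \expectat q v$, and iterating gives the centered second and third moments under $q$. Once $q/p - 1 \in \mBspace p$ is established, the identification $\nabla K_p(u) = q/p - 1$ rewrites the first derivative as $\expectat p {(q/p-1) v}$, since every $v \in \eBspace p$ is $p$-centered. The integrability statement $q/p \in L^{\Phi_*}(p)$ of item 6 is in turn the real input: Definition \ref{def:smile} combined with Proposition \ref{prop:maxexp-pormanteaux}, item \ref{prop:maxexp-pormanteaux-2}, provides $\expectat p {(q/p)^{1+\epsilon}} < +\infty$ for some $\epsilon > 0$, and since $\Phi_*$ grows asymptotically like $x \lnof{1+x}$, strictly slower than $x^{1+\epsilon}$, this forces $q/p \in L^{\Phi_*}(p)$; together with $\expectat p {q/p}=1$ this gives the centered version.

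Items 3, 4, 7, 8, 9 are then short consequences. Strict convexity of $K_p$, sharpened from Proposition \ref{prop:cumulant} by item 1, yields strict monotonicity of its gradient. For item 4, differentiating $\euler^{u-K_p(u)}$ by the chain rule and item 1 gives $d(\nabla K_p(u))w = (q/p)(w - \expectat q w)$, with injectivity inherited from the monotonicity. Item 7 is a direct rewrite $\KL q p = \expectat q {u - K_p(u)} = dK_p(u)u - K_p(u)$. For items 8 and 9, Proposition \ref{prop:maxexp-pormanteaux}, item \ref{prop:maxexp-pormanteaux-5}, identifies $L^\Phi(p)$ and $L^\Phi(q)$ as the same vector space with equivalent norms, so the centering condition $\expectat q u = 0$ translates into $\expectat p {u \cdot q/p} = 0$ via change of measure (well-defined by item 6); the map $u \mapsto u - \expectat q u$ is then clearly a bicontinuous isomorphism of $\eBspace p$ onto $\eBspace q$ with inverse $v \mapsto v - \expectat p v$. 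Item 5 is the dual statement: the identity $\expectat q {(p/q)v} = \expectat p v$ shows $v \mapsto (p/q)v$ sends $\mBspace p$ into $\mBspace q$ with the obvious inverse $v \mapsto (q/p)v$.

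The main obstacles will be the integrability step in item 6, which underpins almost everything else (items 2, 5, 8, and the rewriting in item 7 all need $q/p$ to lie in the predual), and the careful interpretation of item 4, where the derivative must be read as a bounded linear map $\eBspace p \to \mBspace p$; controlling $(q/p)(w - \expectat q w)$ in the predual norm at $p$ relies again on the equivalences of Proposition \ref{prop:maxexp-pormanteaux}, together with a Hölder-type estimate combining $q/p \in L^{\Phi_*}(p)$ with $w - \expectat q w \in L^{\Phi}(p) = L^{\Phi}(q)$.
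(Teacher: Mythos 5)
There is no internal proof to compare against: the paper explicitly states that Proposition \ref{pr:misc} merely \emph{collects} results from \cite{pistone|sempi:95,pistone|rogantin:99,cena:2002,cena|pistone:2007} for ease of reference. Judged on its own, your plan follows the standard route of those references and most of it is sound: differentiating $K_p(u+tv)=\lnof{\expectat p{\euler^{u+tv}}}$ for items 1, 2 and 7; deducing item 6 from the open-arc condition via $\expectat p{(q/p)^{1+\epsilon}}<+\infty$ together with $\Phi_*(x)\le C_\epsilon x^{1+\epsilon}$ (a legitimate alternative to the paper's route through \eqref{eq:phistarent} and finiteness of the Kullback--Leibler divergence); and handling items 8 and 9 through Proposition \ref{prop:maxexp-pormanteaux}(\ref{prop:maxexp-pormanteaux-5}).

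The genuine gap is item 5. The identity $\expectat q{(p/q)v}=\expectat p v$ only shows that centering is preserved; it says nothing about why $(p/q)v$ lies in $L^{\Phi_*}(q)$ at all, nor why the map and its inverse are bounded, and that is the entire analytic content of the claim. Unlike item 9, which is easy because $L^{\Phi}(p)=L^{\Phi}(q)$ as sets, the spaces $L^{\Phi_*}(p)$ and $L^{\Phi_*}(q)$ do \emph{not} coincide in general --- that is precisely why the mixture transport must multiply by the density ratio --- so one needs a genuine estimate, e.g.\ controlling $\expectat q{\Phi_*\bigl(\tfrac pq v\bigr)} = \expectat p{\tfrac qp\,\Phi_*\bigl(\tfrac pq v\bigr)}$ by means of the $\Delta_2$-inequality \eqref{eq:delta2} and the fact that $p/q,\,q/p\in\bigcap_{a>1}L^a$. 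Two smaller imprecisions: in item 4, injectivity of $w\mapsto \tfrac qp(w-\expectat q w)$ is not ``inherited from monotonicity'' of $\nabla K_p$ (the derivative of a strictly monotone map can be singular); it follows from $\expectat p{\,d(\nabla K_p(u))w\cdot w}=\covat q w w>0$ for $w\neq 0$. Likewise in item 3, Proposition \ref{prop:cumulant}(\ref{prop:cumulant1}) does not by itself give strict convexity; that comes from \eqref{eq:Kdoubleprime} and the positivity of the variance of a non-constant centered variable under the everywhere-positive density $q$.
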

\subsection{Tangen bundle}
\label{sec:tangent-bundle}

Our discussion of the tangent bundle of the exponential manifold is based on the concept of velocity of a curve as in \cite[\S 3.3]{abraham|marsden|ratiu:1988} and it is mainly intended to underline its statistical interpretation, which is obtained by identifying curves with one-parameter statistical models. For a statistical model $p(t)$, $t \in I$, the random variable $\dot p(t)/p(t)$, the \emph{Fisher score}, has zero expectation with respect to $p(t)$, and its meaning in the exponential manifold is velocity. If $p(t) = \euler^{tv-\psi(t)}\cdot p$, $v \in L^{\Phi}(p)$, is an exponential family, then $\dot p(t)/p(t) = v - \expectat {p(t)} v \in \eBspace {p(t)}$, see \cite{brown:86} on exponential families.

Let $p(\cdot) \colon I \to \maxexp p$, $I$ open real interval containing 0. In the chart centered at $p$ the curve is $u(\cdot) \colon I \to \eBspace p$, where $p(t) = \euler^{u(t)-K_p(u(t))}\cdot p$. The transition maps of the exponential manifold are

\begin{equation*}
  s_{q} \circ e_{p} \colon \sdomain{p} \ni u \mapsto s_{q}(\euler^{u - K_{p}(u)} \cdot p) = u - \expectat {q} u + \lnof{\frac{p}{q}} - \expectat {q} {\lnof{\frac{p}{q}}} \in \sdomain{q} = \sdomain{p},
\end{equation*}
with derivative

\begin{equation*}
  d_{v} s_{q} \circ s^{-1}_{p} (u) = v - \expectat q v = \etransport pq v, \quad v \in \eBspace p.  
\end{equation*}
\begin{definition}[Velocity field of a curve]\  
\begin{enumerate}
\item\label{item:velocity}
Assume $t \mapsto u(t) = s_p(p(t))$ is differentiable with derivative $\dot u(t)$. Define 

\begin{equation*}
  \delta p(t) = \etransport p {p(t)} \dot u(t) = \dot u(t) - \expectat {p(t)} {\dot u(t)} = \derivby t {(u(t) - K_{p}(u(t))} = \derivby t {\lnof{\frac{p(t)}{p}}} = \frac{\derivby t p(t)}{p(t)}.
\end{equation*}
Note that $\delta p$ does not depend on the chart $s_p$ and that the derivative of $t \mapsto p(t)$ in the last term of the equation is computed in $L^{\Phi_*}(p)$. The curve $t \mapsto (p(t),\delta p(t))$ is the \emph{velocity field} of the curve.
\item On the set $\setof{(p,v)}{p \in \pdensities, v \in \eBspace p}$ the charts

  \begin{equation*}
    s_p \colon \setof{(q,w)}{q \in \maxexp p, v \in \eBspace q} \ni (q,w) \mapsto (s_p(q),\etransport q p w) \in \sdomain p \times \eBspace p \subset \eBspace p \times \eBspace p
  \end{equation*}
define the \emph{tangent bundle} $T\pdensities$. The  isomorphism $w \mapsto \etransport p q w = w - \expectat p w = d (s_q\circ s_p^{-1})(u) w$ of Prop. \ref{pr:misc}(\ref{pr:misc-5}) is the (exponential) \emph{parallel transport}.
\end{enumerate}
\end{definition}

Let $E \colon \maxexp p \to \reals$ be a $C^1$ function. Then $E_p = E \circ e_p \colon \sdomain p \to \reals$ is differentiable and

\begin{equation*}
  \derivby t E(p(t)) = \derivby t E_p(u(t)) = dE_p(u(t)) \dot u(t) = dE_p(u(t)) \etransport {p(t)} p \delta p(t).
\end{equation*}

\begin{proposition}[Covariant derivative of a real function]\ 
\label{prop:covariant-derivative}%
\begin{enumerate}
\item
As $v \mapsto dE_p(u) v$ is a linear operator on $\eBspace p$, $w \mapsto dE_p(u) \etransport {e_p(u)} p w$ is a linear operator on $\eBspace {e_p(u)}$ which does not depend on $p$. 
\item \label{item:covariant-derivative2}
If $G$ is a vector field in $T\pdensities$, the \emph{covariant derivative} $D_GE$ is

\begin{equation*}
  D_G E (q) = dE_p(s_p(q)) \etransport {e_p(u)} p w  = dE_q(0) w, \quad w = G(q).
\end{equation*}
\item\label{item:covariant-derivative-3} Assume moreover that $dE_p(u) \in {\eBspace p}^*$ can be identified with an element $\nabla E_p(u) \in \mBspace p$ by

  \begin{equation*}
    dE_p(u) w = \expectat p {\nabla E_p(u) w}, \quad w \in \eBspace p.
  \end{equation*}
Then for $u = e_p(q)$

\begin{equation*}
    D_GE(q) = dE_p(u)\etransport qp G(q) = \expectat q {\mtransport pq {\nabla E_p(u)} G(q)}.
\end{equation*}
We define the \emph{covariant gradient} $\nabla_GE(q)$ to be defined by $D_GE(q) = \expectat q {\nabla_GE(q) G(q)}$. 
\end{enumerate}
\end{proposition}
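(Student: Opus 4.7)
The plan is to establish parts~(1)--(3) in sequence, each piggybacking on the previous. For part~(1), fix $q$ and take any two base points $p_1, p_2$ with $q \in \maxexp{p_1} = \maxexp{p_2}$, whose existence and coincidence on $\maxexp{\cdot}$ follow from Proposition~\ref{prop:maxexp-pormanteaux}. Writing $u_i = s_{p_i}(q)$ and applying the chain rule to the identity $E_{p_1} = E_{p_2}\circ(s_{p_2}\circ e_{p_1})$, with the transition derivative $d(s_{p_2}\circ e_{p_1})(u_1) = \etransport{p_1}{p_2}$ computed just before the statement, I obtain
\begin{equation*}
dE_{p_1}(u_1)\, v \;=\; dE_{p_2}(u_2)\, \etransport{p_1}{p_2} v, \qquad v \in \eBspace{p_1}.
\end{equation*}
Post-composing with $\etransport{q}{p_1}$ reduces the independence claim to the cocycle identity $\etransport{p_1}{p_2}\circ\etransport{q}{p_1} = \etransport{q}{p_2}$, which is a one-line check from $\etransport{r}{s}v = v - \expectat{s}{v}$: the intermediate mean cancels.

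Part~(2) is then just the resulting well-defined expression, and the compact form $D_G E(q) = dE_q(0)\, w$ comes from specializing $p = q$, so that $u = s_q(q) = 0$ and $\etransport{q}{q}$ is the identity on $\eBspace{q}$.

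For part~(3), pick any $p \smile q$, set $u = s_p(q)$ and $w = G(q) \in \eBspace{q}$, and expand using the assumed Riesz-type representation. Since $\etransport{q}{p}G(q) = G(q) - \expectat{p}{G(q)}$,
\begin{equation*}
D_G E(q) \;=\; \expectat{p}{\nabla E_p(u)\bigl(G(q) - \expectat{p}{G(q)}\bigr)} \;=\; \expectat{p}{\nabla E_p(u)\, G(q)},
\end{equation*}
the correction term vanishing because membership $\nabla E_p(u) \in \mBspace{p}$ encodes $\expectat{p}{\nabla E_p(u)} = 0$. Rewriting the remaining $p$-expectation as a $q$-expectation via $p = (p/q)\,q$ produces exactly $\expectat{q}{\mtransport{p}{q}\nabla E_p(u) \cdot G(q)}$, and nondegeneracy of the $\mBspace{q}$--$\eBspace{q}$ pairing on centered random variables forces $\nabla_G E(q) = \mtransport{p}{q}\nabla E_p(u)$; part~(1) then guarantees that this representative is independent of the chart $p$.

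The point requiring most care is the bookkeeping of the two dual parallel transports: $\etransport{\cdot}{\cdot}$ recenters and moves the exponential (tangent) side, while $\mtransport{\cdot}{\cdot}$ is multiplication by a density ratio and moves the mixture (cotangent) side. Once the source--target conventions are tracked through the chain-rule composition and the cocycle identity, no deeper analytic input is required beyond the $C^1$ assumption on $E$ and the manifold structure already established.
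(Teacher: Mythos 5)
Your proposal is correct and follows essentially the same route as the paper's proof: the chain rule through the transition map $d(s_{p_2}\circ e_{p_1})(u_1)=\etransport{p_1}{p_2}$ combined with the cocycle identity for the exponential transport in part (1), specialization to $p=q$ in part (2), and the change of measure $\expectat p{\cdot}=\expectat q{(p/q)\,\cdot}$ together with the centering $\expectat p{\nabla E_p(u)}=0$ in part (3). Your bookkeeping of the transport indices is in fact cleaner than the paper's (whose displayed cocycle step contains an index typo), but there is no substantive difference in the argument.
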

\begin{proof}\ 
\begin{enumerate}
\item
Assume $u_1 = s_{p_1}(q) = s_{p_1}\circ e_{p_2}(u_2)$ so that $E(q) = E_{p_1}(u_1) = E_{p_2}(u_2) = E_{p_1}(s_{p_1}\circ e_{p_2}(u_2))$ and

\begin{equation*}
  dE_{p_2}(u_2) \etransport {q} {p_2} w = dE_{p_1}\circ s_{p_1}\circ e_{p_2}(u_2) \etransport {q} {p_2} w = dE_{p_1}(u_1) \etransport {p_1} {p_2} \etransport {q} {p_2} w = dE_{p_1}(u_1) \etransport {q} {p_1} w.
\end{equation*}
\item Compute the derivative of $t \mapsto E\circ p(t)$ when $\delta p(t) = G(p(t))$. 
\item It is a computation based on

  \begin{equation*}
    \expectat p {\nabla E_p(u)(G(p) - \expectat p {G(q)})} = \expectat q {\frac pq \nabla E_p(u)(G(q) - \expectat p {G(q)})} = \expectat q {\frac pq \nabla E_p(u)(G(q)}.
  \end{equation*}
\end{enumerate}
\end{proof}
\begin{definition}
  Let $F, G \colon \maxexp p$ be  vector fields of $T\pdensities$. In the chart at $p$, $F_p(u) = \etransport {e_p(u)} p F \circ e_p(u)$, $u \in \sdomain p$ has differential $B_p \colon v \mapsto d F_p(u) v \in B_p$. The \emph{e-covariant derivative} is the vector field defined by $D_GF(q) = \etransport p{q} d F_p(s_p(q)) \etransport {q} p w$, $w = G(q)$, and this definition does not depend on $p$.  
\end{definition}

\subsection{Pretangent bundle}
\label{sec:pretangent-bundle}

Because of the lack of reflexivity of the exponential Orlicz space, we are forced to distinguish between the dual tangent bundle $(T\pdensities)^* = \setof{(p,v)}{p \in \pdensities, v \in (\eBspace p)^*}$ and a pretangent bundle.
\begin{definition}
The set $\setof{(q,v)}{q \in \pdensities, v \in \mBspace q}$ together with the charts

\begin{equation*}
  \prescript{*}{}s_p \colon \setof{(q,v)}{q \in \maxexp p, v \in \mBspace q} \ni (q,v) \mapsto \left(s_p(q), \mtransport q p v\right)
\end{equation*}
is the \emph{pretangent bundle} $\prescript{*}{}T\pdensities$.
\end{definition}

The pretangent bundle is actually the tangent bundle of the \emph{mixture manifold} on $\sdensities = \setof{f \in L^1(\mu)}{\int f \ d\mu = 1}$ whose charts are of the form $\eta_p(q) = q/p -1 \in \mBspace p$. For each $p \in \pdensities$ consider the set

\begin{equation*}
  \mcoverat p = \setof{q \in \sdensities}{q/p \in L^{\Phi_*}(p)}
\end{equation*}
and the mapping

\begin{equation*}
  \eta_p \colon \mcoverat p \ni q \mapsto \eta_p(q) = q/p-1 \in \mBspace p. 
\end{equation*}

Let characterize $\mcoverat p$ as the set the set of $q$'s of finite Kullback-Leibler divergence from $p$. 
\begin{proposition}[{\cite[Prop. 30]{cena:2002}}]
Let $p \in \pdensities$ and $q \in \sdensities$. Define $\tilde q = \absoluteval q / \int \absoluteval q \ d\mu$. Then $\KL{\tilde q} p < +\infty$ if, and only if, $q/p \in L^{\Phi_*}(p)$.   
\end{proposition}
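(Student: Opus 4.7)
The plan is to reduce to the case of a bona fide nonnegative probability density (via evenness, scalar closure, and the $\Delta_2$-property \eqref{eq:delta2}), and then close the gap by an elementary two-sided pointwise comparison between $\Phi_* = \Phi_*^{(\text{a})}$ and the function $x \mapsto x\ln x$ on $[0,+\infty)$.

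First, since $\Phi_*$ is even and $L^{\Phi_*}(p)$ is a vector space, one has $q/p \in L^{\Phi_*}(p)$ if and only if $|q|/p \in L^{\Phi_*}(p)$ if and only if $\tilde q/p \in L^{\Phi_*}(p)$, where $Z = \int |q|\,d\mu \ge 1$ and $\tilde q = |q|/Z$. Moreover, the $\Delta_2$-estimate $\Phi_*(av) \le a^2 \Phi_*(v)$ for $a > 1$ of \eqref{eq:delta2}, combined with the monotonicity of $\Phi_*$ on $[0,+\infty)$, yields the standard equivalence that $v \in L^{\Phi_*}(p)$ if and only if $\expectat p{\Phi_*(v)} < +\infty$, with no quantifier on a scalar $\alpha$. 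Thus the claim reduces to the following: for the nonnegative density $r := \tilde q$,
\begin{equation*}
\expectat p{\Phi_*(r/p)} < +\infty \quad \text{iff} \quad \KL r p = \expectat p{(r/p)\ln(r/p)} < +\infty.
\end{equation*}

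The core of the proof is the two-sided pointwise inequality
\begin{equation*}
x\ln x - x \;\le\; \Phi_*(x) \;\le\; x\ln x + 1, \qquad x \ge 0,
\end{equation*}
which I would establish by analyzing $g(x) := (1+x)\ln(1+x) - x\ln x = \Phi_*(x) + x - x\ln x$. From $g(0^+) = 0$ and $g'(x) = \ln(1+1/x) > 0$ one obtains $g \ge 0$, giving the lower bound. For the upper bound, rewrite $g(x) = \ln(1+x) + x\ln(1+1/x)$ and apply the elementary inequality $\ln(1+y) \le y$ to each summand, obtaining $g(x) \le \ln(1+x) + 1 \le x + 1$, whence $\Phi_*(x) = g(x) - x + x\ln x \le x\ln x + 1$.

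To conclude, $\Phi_*(r/p) \ge 0$ and $(r/p)\ln(r/p) \ge -1/\euler$ are bounded below by $p$-integrable functions, so integrating the pointwise inequalities against $p$ and using $\expectat p{r/p} = 1$ gives
\begin{equation*}
\KL r p - 1 \;\le\; \expectat p{\Phi_*(r/p)} \;\le\; \KL r p + 1,
\end{equation*}
and hence finiteness of either side is equivalent to finiteness of the other. The only step where one should pause is the initial reduction, namely verifying that $|q/p| = |q|/p$, that $\Phi_*$ is even, and that $\Delta_2$ really does allow one to drop the $\alpha$-quantifier in the Luxemburg membership test; once that is settled, the remaining substance is routine calculus.
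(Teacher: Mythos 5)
Your proof is correct and follows essentially the same route as the paper's: both directions rest on a two-sided pointwise comparison of $\Phi_*(x)$ with $x\ln x$ up to bounded additive terms, which is then integrated against $p$ using $\expectat p{\tilde q/p}=1$. The only differences are cosmetic --- you derive the upper bound from $\ln(1+y)\le y$ where the paper uses a tangent/convexity comparison giving the sharper constant $1-\lnof{\euler-1}$, and you make explicit the $\Delta_2$ step that removes the scalar quantifier in the Luxemburg membership test, which the paper leaves implicit.
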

\begin{proof}
The second derivative of $\Phi_*(x) = (1+x)\lnof{1+x} -x$, $x>0$, is $1/(1+x)$, while the second derivative of $x\lnof x$ is $1/x$. The function $x\lnof x$ is more convex that $\Phi_*(x)$ as $0 < 1/(1+x) < 1/x$. The two functions have parallel tangents at $x > 0$ if $\lnof{1+x} = \lnof x + 1$, that is at $\bar x = 1/(\euler -1)$. At this point the difference of the values is

\begin{equation*}
\Phi_*(\bar x) - \bar x \lnof{\bar x} = 1 - \lnof{\euler-1}.   
\end{equation*}
In conclusion, we have the inequalities

\begin{equation}\label{eq:phistarent}
  \Phi_*(x) \le x\lnof x + 1 - \lnof{\euler-1} < x\ln x + 1 , \quad x \ge 0.
\end{equation}
If $\KL{\tilde q}p < + \infty$. then

\begin{equation*}
  + \infty > \int \lnof{\frac{\tilde q}{p}} \tilde q \ d\mu = \expectat p {\frac{\tilde q}{p} \lnof{\frac{\tilde q}{p} }} > \expectat p {\Phi_*\left(\frac{\tilde q}{p}\right)} -1 = \expectat p {\Phi_*\left(\left(\int \absoluteval q \ d\mu \right)^{-1}\frac{q}{p}\right)} -1, 
\end{equation*}
so that $q/p \in L^{\Phi_*}(p)$.

Assume now $q/p \in L^{\Phi_*}(p)$, or, equivalently, $\tilde q / p \in L^{\Phi_*}(p)$. As $x\ln^+(x) \le (1+x)\lnof x$ for $x \le 0$, we have

\begin{equation*}
+\infty > \expectat p {\phi_*\left(\frac {\tilde q}p\right)} = \expectat p {\left(1+\frac {\tilde q}p\right)\lnof{1+\frac {\tilde q}p}} - 1 \ge \expectat p {\frac{\tilde q}p \ln^+\left(\frac{\tilde q}p\right)} - 1,
\end{equation*}
which in turn implies that 

\begin{equation*}
  \KL{\tilde q}p < \expectat p {\frac{\tilde q}p \ln^+\left(\frac{\tilde q}p\right)}
\end{equation*}
is finite.
\end{proof}
 
The covariant gradient defined in Prop. \ref{item:covariant-derivative-3}(\ref{item:covariant-derivative-3}) is a vector field of the pretangent bundle. Note that the injection $\pdensities \hookrightarrow \sdensities$ is represented in the charts centered at $p$ by $u \mapsto \euler^{u-K_p(u)} \cdot p - 1$. We do not further discuss here the mixture manifold  and refer to \cite[Sec. 5]{cena|pistone:2007} for further information on this topic. 

Let $F$ be a vector field of the pretangent bundle $\prescript{*}{}T\pdensities$. In the chart centered at $p$ $\maxexp p \ni q \mapsto F(q)$ is represented by

  \begin{equation*}
    F_p(u) = \mtransport {e_p(u)} p {F\circ e_p (u)} \in \mBspace p, \quad u \in \sdomain p. 
  \end{equation*}
If $F_p$ is of class $C^1$ with derivative $dF_p(u) \in L(\eBspace p, \mBspace p)$, for each differentiable curve $t \mapsto p(t) = \euler^{u(t) - K_p(u(t))} \cdot p$,

\begin{equation*}
  \derivby t F_p(p(t)) = d F_p(u(t)) \dot u(t) = dF_p(u(t)) \etransport {p(t)} p {\delta p(t)} \in \mBspace p.  
\end{equation*}
For each $q = e_p(u) \in \maxexp p$, $w \in \mBspace q$, $\mtransport p q {dF_p(u)} \etransport q p w \in \mBspace q$ does not depend on $p$.
  \begin{definition}[Covariant derivative in $\prescript{*}T\pdensities$.]
    Let $F$ be a vector field of the pretangent bundle $\prescript{*}{}T\pdensities$ and $G$ a vector field in the tangent bundle $\pdensities$, both of class $C^1$ on $\maxexp p$. The covariant derivative is
   
 \begin{equation*}
      D_GF(q) = d \mtransport {e_q(u)} q {F\circ e_q }(0) w, \quad w = G(q).
    \end{equation*}
\end{definition}

Tangent and pretangent bundle can be coupled to produce the new frame bundle

\begin{equation*}
  (\prescript{*}{}T \times T)\pdensities = \setof{(p,v,w)}{p \in
    \pdensities, v \in \eBspace p, w \in \mBspace p}
\end{equation*}
with the duality coupling

\begin{equation*}
  (\prescript{*}{}T \times T)\pdensities \ni (p,v,w) \mapsto \scalarat p
  v w = \expectat p {uv} = \expectat q {\mtransport p q v \etransport
    p q w}, \quad p \smile q.
\end{equation*}

\begin{proposition}[Covariant derivative of the duality coupling]
Let $F$ be a vector field of $\prescript{*}{}T\pdensities$, $G, H$ vector fields of $T\pdensities$, all of class $C^1$ on a maximal exponential model $\mathcal E$. Then

\begin{equation*}
  D_H \scalarof F G = \scalarof {D_H F} G + \scalarof F {D_HG}.
\end{equation*}
\end{proposition}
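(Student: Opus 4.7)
My plan is to verify the Leibniz identity pointwise at an arbitrary $q \in \mathcal E$ by working in the exponential chart centered at $q$ itself. At $q$ one has $s_q(q) = 0$ and the parallel transports $\etransport q q$ and $\mtransport q q$ both reduce to the identity, so the chart expressions of $F$ and $G$ at the origin coincide with the values $F(q)$ and $G(q)$, while the intrinsic covariant derivatives become ordinary Fr\'echet derivatives at $0$. This is the cleanest setting to carry out the computation because it dispenses with the bookkeeping of transports between distinct base points.

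Concretely, I would pick any $C^1$ curve $t \mapsto p(t)$ through $p(0) = q$ whose velocity field is $\delta p(t) = H(p(t))$, set $u(t) = s_q(p(t))$ so that $u(0) = 0$ and $\dot u(0) = \etransport q q H(q) = H(q)$, and consider the local representatives $F_q(u) = \mtransport{e_q(u)} q F(e_q(u)) \in \mBspace q$ and $G_q(u) = \etransport{e_q(u)} q G(e_q(u)) \in \eBspace q$. The invariance of the duality coupling under coupled parallel transport (the last displayed formula preceding the proposition, applied with base $q$) gives
\begin{equation*}
  \scalarof{F(p(t))}{G(p(t))} = \expectat q {F_q(u(t))\, G_q(u(t))}.
\end{equation*}
Differentiating at $t=0$ via the Leibniz rule for the bilinear expectation pairing, and evaluating $F_q(0) = F(q)$, $G_q(0) = G(q)$ together with $D_H F(q) = dF_q(0)H(q)$ (from the definition of the covariant derivative in $\prescript{*}{}T\pdensities$, read in the chart at $q$) and analogously $D_H G(q) = dG_q(0)H(q)$ from Proposition~\ref{prop:covariant-derivative}(\ref{item:covariant-derivative2}), I obtain
\begin{equation*}
  D_H \scalarof F G (q) = \expectat q {D_H F(q)\, G(q)} + \expectat q {F(q)\, D_H G(q)},
\end{equation*}
which is the asserted identity once the two terms on the right are recognised as the pairings $\scalarof{D_H F}{G}(q)$ and $\scalarof F {D_H G}(q)$.

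The only delicate step I anticipate is the interchange of $d/dt$ with $\expectat q {\cdot}$. This is handled by the Banach-space chain rule applied to the composition
\begin{equation*}
  t \ \mapsto \ (F_q(u(t)),\, G_q(u(t))) \ \mapsto \ \expectat q {F_q(u(t))\, G_q(u(t))},
\end{equation*}
whose inner map is $C^1$ by hypothesis (composition of $C^1$ maps in the two bundle charts) and whose outer map is the bilinear pairing $\mBspace q \times \eBspace q \to \reals$, bounded with operator norm at most $2$ by the Young inequality recalled in Section~\ref{sec:model-spaces}. No further integrability assumption is needed beyond those already encoded in the definitions of $T\pdensities$, $\prescript{*}{}T\pdensities$ and the $C^1$ regularity of $F$, $G$ on $\mathcal E$.
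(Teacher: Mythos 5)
Your proof is correct and follows essentially the same route as the paper's: represent the coupling in an exponential chart via the transport invariance $\scalarof F G (q) = \expectat p {F_p(u)G_p(u)}$ and differentiate the bilinear expression by the product rule. Centering the chart at $q$ itself, so that the transports reduce to the identity at the evaluation point, is only a cosmetic simplification of the paper's ``chart centered at any $p$'' computation, and your remarks on the boundedness of the pairing merely make explicit what the paper leaves implicit.
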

\begin{proof}
Consider the real function $\mathcal E \ni q \mapsto \scalarof F G (q)
= \expectat q {F(q)G(q)}$ in the chart centered at any $p \in \mathcal
E$,

\begin{equation*}
  \sdomain p  \ni u \mapsto \expectat q {F(q)G(q)} = \expectat p
  {\mtransport q p {F \circ e_p(u)} \etransport q p {G \circ e_p(u)}}
  = \expectat p {F_p(u)G_p(u)} 
\end{equation*}
and compute its derivative.
\end{proof}
\subsection{The Hilbert bundle.}
\label{sec:hilbert-bundle}
The duality on $(\prescript{*}{}T \times T)\pdensities$ is reminiscent of a Riemannian metric, but it is not, because we do not have a Riemannian manifold unless the state space is finite. However, we we can push on the analogy, by constructing an Hilbert bundle. As $L^{\Phi}(p) \subset L^{2}(p) \subset L^{\Phi_*}(p)$, $p \in \pdensities$,  we have $\eBspace p \subset H_p \subset \mBspace p$, $L^2_0(p) = H_p$ being the fiber at $p$. The Hilbert bundle

\begin{equation*}
  H \pdensities = \setof{(p,v)}{p \in \pdensities, v \in H_p}
\end{equation*}
is provided with an atlas of charts by using the isometries $\transport pq \colon H_p \to H_q$  which result from the pull-back of the metric connection on the sphere $S_\mu = \setof{f \in L^2(\mu)}{\int f^2 \ d\mu = 1}$, see \cite{gibilisco|pistone:98,gibilisco|isola:1999,grasselli:2010AISM} and \cite[Sec. 4]{pistone:2013GSI2013}.

\begin{proposition}[Isometric transport: {\cite[Prop. 13]{pistone:2013GSI2013}}]\ 
\label{prop:Tisometry}
 \begin{enumerate}
  \item For all $p,q \in \pdensities$, the mapping

 \begin{equation*}
\transport pq \colon v \mapsto \sqrt{\frac pq} u  -  \left(1 +  \expectat q {\sqrt{\frac pq}}\right)^{-1} \left(1 + \sqrt{\frac pq}\right) \expectat q {\sqrt{\frac pq}v}      
    \end{equation*}
is an isometry of $H_p\pdensities$ onto $H_q\pdensities$. 
  \item $\transport qp \circ \transport pq u = u$, $u \in H_p\pdensities$ and $(\transport pq)^t = \transport qp$.
  \end{enumerate}
\end{proposition}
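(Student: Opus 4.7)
The plan is to realize $\transport pq$ as the pullback, through the half-density embedding $\pdensities \ni p \mapsto \sqrt p \in S_\mu$, of the Levi-Civita parallel transport on the unit sphere $S_\mu = \setof{f \in L^2(\mu)}{\int f^2 \, d\mu = 1}$. The key identification is the linear map $\iota_p \colon H_p \ni u \mapsto u\sqrt p \in L^2(\mu)$, which is an isometric bijection onto the tangent space $T_{\sqrt p}S_\mu = \setof{\xi \in L^2(\mu)}{\int \xi\sqrt p \, d\mu = 0}$: the norm is preserved since $\int (u\sqrt p)^2 \, d\mu = \int u^2 p \, d\mu$, and the image lies in the tangent plane because $\int u \cdot p \, d\mu = \expectat p u = 0$.

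Next, I would invoke the closed form of parallel transport on $S_\mu$ along the minimizing geodesic from $x$ to $y$, which is well-defined whenever $\scalarof{x}{y} > -1$ (and this holds here since $\sqrt p, \sqrt q > 0$ a.s.):
\begin{equation*}
\tau_{x,y}(\xi) = \xi - \frac{\scalarof{\xi}{y}}{1+\scalarof{x}{y}}(x+y), \qquad \xi \in T_xS_\mu.
\end{equation*}
This formula is verifiable from scratch in a few lines by checking (i) $\tau_{x,y}$ maps $T_xS_\mu$ into $T_yS_\mu$, (ii) it preserves the $L^2(\mu)$-norm, and (iii) $\tau_{y,x}\circ \tau_{x,y}$ is the identity on $T_xS_\mu$. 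Setting $x = \sqrt p$, $y = \sqrt q$, and computing $\scalarof{\sqrt p}{\sqrt q} = \expectat q {\sqrt{p/q}}$ and $\scalarof{u\sqrt p}{\sqrt q} = \expectat q {u\sqrt{p/q}}$, the definition $\transport pq = \iota_q^{-1}\circ \tau_{\sqrt p, \sqrt q}\circ \iota_p$ reproduces the displayed formula after dividing by $\sqrt q$.

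Claim (1), the isometry of $H_p$ onto $H_q$, then follows immediately from (ii): $\normat{2,q}{\transport pq u} = \|\tau_{\sqrt p, \sqrt q}(u\sqrt p)\|_{L^2(\mu)} = \|u\sqrt p\|_{L^2(\mu)} = \normat{2,p}{u}$. The involution $\transport qp\circ \transport pq = \mathrm{id}$ is the pullback of (iii). For the adjoint identity $(\transport pq)^t = \transport qp$, the remark is that, as an isometry between the Hilbert subspaces $T_{\sqrt p}S_\mu$ and $T_{\sqrt q}S_\mu$ of $L^2(\mu)$, $\tau_{\sqrt p,\sqrt q}$ has Hilbert-adjoint $\tau_{\sqrt q,\sqrt p}$; then
\begin{equation*}
\scalarat q {\transport pq u} v = \scalarof{\tau_{\sqrt p, \sqrt q}(u\sqrt p)}{v\sqrt q} = \scalarof{u\sqrt p}{\tau_{\sqrt q, \sqrt p}(v\sqrt q)} = \scalarat p u {\transport qp v}.
\end{equation*}

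The only conceptual worry is invoking the infinite-dimensional Riemannian structure on $S_\mu$; I would sidestep this by taking the explicit formula for $\tau_{x,y}$ as the working definition and verifying (i)--(iii) directly as identities among bounded operators on $L^2(\mu)$, making no appeal to the abstract machinery of infinite-dimensional Levi-Civita connections. The only bookkeeping step of any substance is checking well-definedness of the denominator, which is guaranteed by $\expectat q {\sqrt{p/q}} = \int \sqrt{pq}\, d\mu > 0$.
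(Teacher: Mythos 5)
Your proposal is correct and follows exactly the route the paper indicates: the paper gives no proof here (it cites Prop.~13 of the reference) but explicitly describes these isometries as the pull-back, through $p \mapsto \sqrt p$, of the metric connection on the sphere $S_\mu$, which is precisely your construction via $\iota_p$ and the explicit transport $\tau_{x,y}$. Your direct verification of (i)--(iii) and of the adjoint identity is sound, and the well-definedness check $\int \sqrt{pq}\, d\mu > 0$ is the right one.
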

Note that $\transport qr \transport pq \neq \transport pr$.
\begin{definition}[Hilbert bundle]
The charts

\begin{equation*}
  \prescript{2}{}s_p \colon \setof{(q,v)}{q \in\maxexp p, v \in H_q} \ni (q,v) \mapsto \left(s_p(q),\transport qp v\right) \in \sdomain p \times H_p \subset \eBspace p \times H_p
\end{equation*}
form an atlas on $H\pdensities$.
\end{definition}

Let $t \mapsto p(t)$ be a $C^1$ curve in $\maxexp p$, $p = p(0)$, $u(t) = s_p(p(t))$, and $F \colon \maxexp p$ a $C^1$ vector field in $H\pdensities$. In che chart centered at $p$ we have $F_p(u(t)) = \transport {p(t)} p (F \circ e_p)(u(t))$. A computation shows that

\begin{align*}
  \left. \derivby t F_p(t) \right|_{t=0} &= \left. \derivby t \transport {p(t)} p (F \circ e_p)(u(t)) \right|_{t=0} \\ &= dF_p(0) \delta p(0) + \frac12 F_p(0) \delta p(0) - \expectat p {dF_p(0) \delta p(0) + \frac12 F_p(0) \delta p(0)}, 
\end{align*}
which could be used as a nonparametric definition of the metric connection, see \cite{grasselli:2010AISM}, \cite[Sec. 4.4]{pistone:2013GSI2013}.

\subsection{The second tangent bundle}

We briefly discuss here the second order structure, i.e. the tangent bundle of tangent bundle $T\pdensities$. Let $F \colon I \ni t \mapsto (p(t),V(t))$ be a $C^1$ curve in the tangent bundle $T\pdensities$. In the chart centered at $p$ we have

\begin{equation*}
  F_p(t) = (s_p(p(t)),\etransport {p(t)} p V(t)) = (u(t), V_p(t)),
\end{equation*}
where $p(t) = \euler^{u(t) - K_p(u(t))} \cdot p$ and $V(t) = V_p(t) - \expectat {p(t)} {V_p(t)} = V_p(t) - dK_p(u(t))(V_p(t))$. It follows that $t \mapsto V(t)$ in differentiable in $L^{\Phi}(p)$, with derivative

\begin{equation*}
  \dot V(t) = \dot V_p(t) - dK_p(u(t))(\dot V_p(t)) - d^2(p(t))(V_p(t),\dot u(t)) = \etransport  p{p(t)}{\dot V_p(t)} - \covat {p(t)} {V_p(t)}{\dot u(t)},
\end{equation*}
hence

\begin{equation}\label{eq:etransportVdot}
 \etransport  p{p(t)}{\dot V_p(t)} = \dot V(t) + \expectat {p(t)} {V(t)}{\delta p(t)}.
\end{equation}
It follows in particular that $\expectat {p(t)}{\dot V(t)} = - \expectat {p(t)} {V(t)\delta p(t)}$ and $\etransport  p{p(t)}{\dot V_p(t)} = \dot V(t) - \expectat {p(t)} {\dot V(t)}$. Note that the left end side is not a transport but an extension of the transport, precisely the projection $\Pi^{p(t)} \colon L^{\Phi}(p) \to \eBspace {p(t)}$. It follows from $  \dot F_p(t) = \left(\dot u(t), \dot V_p(t)\right)$ that the velocity vector is

\begin{equation*}
\delta (p,V)(t) = \left(\delta p(t), \etransport p {p(t)} {\dot V_p(t)}\right) = \left(\delta p(t), \Pi^{p(t)} {\dot V(t)}\right).
\end{equation*}

The equality \eqref{eq:etransportVdot} in the case $V(t) = (\delta p)(t)$ gives

\begin{equation*}
  \Pi^{p(t)} {\dot {(\delta p)}(t)} = \dot {(\delta p)}(t) + \expectat {p(t)} {\delta p(t)^2} = \dot {(\delta p)}(t) + I(p(t)),  
\end{equation*}
where we have denoted by $I(p(t)) = \expectat {p(t)}{\derivby t {\lnof {p(t)}^2}}$ the Fisher information. In this case we can write

  \begin{equation*}
    \delta (p,\delta p)(t) = (\delta p (t), \dot{(\delta p)}(t) + I(p(t))). 
  \end{equation*}

\section{Applications.}
\label{sec:applications}

In this section we consider a typical set of exemples where the nonparametric framework is applicable.

\subsection{Expected value.}
\label{sec:expected-value}

Let $f \in L^{\Phi}(p)$, $f_0 = f - \expectat p f \in \eBspace p$, and consider the relaxed mapping

\begin{equation}\label{eq:relaxed}
  E \colon\maxexp p \ni q \mapsto \expectat q f = \expectat q {f_0} + \expectat p f.
\end{equation}
The information geometric study of the relaxed mapping can be based on the notion of \emph{natural gradient} as defined in a seminal paper by Amari \cite{amari:1998natural} and it is currently used for optimization, see e.g. \cite{malago|matteucci|dalseno:2008,malago|matteucci|pistone:2009NIPS, malago|matteucci|pistone:2011a,Wierstra:arXiv1106.4487,arnoldetal:2011arXiv,malago:2012thesis,malago|matteucci|pistone:2013CEC}. Covariant derivative of a real function is the nonparametric counterpart of Amari's natural gradient.

From the properties of $K_p$ in Eq.s \eqref{eq:Kprime} and \eqref{eq:Kdoubleprime} of Prop. \ref{pr:misc} we obtain the representation of the function in \eqref{eq:relaxed} in the chart centered at $p$, $E_p(u) = dK_p(u)(f_0) + \expectat p f$ and its differential $d E_p(u) v = d^2K_p(u)(f_0,v) = \covat q {f}{v}$. The covariant derivative at $(q.w) \in T\eBspace q$ is computed from Def. \ref{prop:covariant-derivative}(\ref{item:covariant-derivative2}) as 

\begin{equation*}
  d E_p(u) \etransport {q}{p} w = \covat q {f}{\etransport {q}{p} w} = \expectat q {(f-\expectat q f)(w - \expectat p w)} = \expectat q {(f-\expectat q f) w},
\end{equation*}
hence $D_{G} E(p) = \expectat p {(f - \expectat p f)G(p)}$ with gradient $\nabla_{G} E(q) = f - \expectat q f$ in the duality on $\mBspace q \times \eBspace q$. Note that the gradient is never zero unless $f$ is constatant and that the covariant derivative is zero for each vector field $G$ which is uncorrelated with $f$. 

Consider the gradient vector field $F(q) = f - \expectat q f \in \prescript{*}{}T\pdensities$. The gradient flow is

\begin{equation*}
  \delta p(t) = \derivby t \lnof{p(t)} = f - \expectat {p(t)} f,
\end{equation*}
whose unique solution is the exponential family $p(t) \propto \euler^{tf} \cdot p(0)$. In fact, the gradient is actually the e-transport of $f_0$, $F(p) = \etransport p {e_p(u)} f_0$ and the exponential family is the exponential curve of the e-transport.

Let us discuss the differentiability of the gradient. In the chart centered at $p$ the gradient is represented as

\begin{equation*}
F_p(u) = \mtransport {e_p(u)} p [f_0 - d K_p(u) (f_0)] = \mtransport {e_p(u)} p \etransport p {e_p(u)} f_0.
\end{equation*}
Let us first compute the differential of $u \mapsto \scalarat p {F_p(u)}{w}$, $w \in B_p$, in the direction $v \in B_p$, i.e. the weak differential:

\begin{multline*}
  d_v \scalarat p {F_p(u)}{w} = d_v \scalarat p {\mtransport {e_p(u)} p \etransport p {e_p(u)} f_0}{w} = \covat {e_p(u)} {f_0} w = \\ d_v d^2 K_p(u)(f_o,w) = d^3 K_p(u)(f_0,w,v) = \Cov_{e_p(u)}(f_0,w,v), 
\end{multline*}
where we have used Prop. \ref{pr:misc}. At $u = 0$

\begin{equation*}
  d_v \scalarat p {F_p(0)}{w} = \expectat p {f_0wv} = \expectat p {\left(f_0v - \expectat p {f_0v}\right)w} = \scalarat p {f_0v - \expectat p {f_0v}} w.
\end{equation*}

The product $f_0G(p)$ belongs to $\mBspace p$. In fact, 

\begin{equation*}
  \expectat p {\Phi_*\left(f_0G(p)\right)} = \expectat p {f_0^2 \integrali 0 {\absoluteval{G(p)}} {\frac{\absoluteval{G(p)} - u}{1+\absoluteval{f_0}u}} u } \le \frac 12 \expectat p {f_0^2 G(p)^2} < + \infty.
\end{equation*}
If $D_G\nabla E$ exists in $\prescript{*}{}\pdensities$ as a Frech\'et derivative, then

\begin{equation*}
  D_{G}\nabla E(p) = f_0G(p) - \expectat p  {f_0G(p)}.
\end{equation*}
The differentiability in Orlicz spaces of superposition operators is discussed in detail in \cite{appell|zabrejko:1990}.

\subsection{Kullback-Leibler divergence.}
\label{sec:kullb-leibl-diverg}

If $\mathcal E$ is a maximal exponential model, the mapping

\begin{equation*}
  \mathcal E \times \mathcal E \ni (q_1,q_2) \mapsto \KL {q_1}{q_2} = \expectat {q_1} {\lnof{\frac {q_1}{q_2}}} 
\end{equation*}
is represented in the charts centered at $p$ by

\begin{equation*}
  E_p \colon \sdomain {p} \times \sdomain {p} \ni (u_1,u_2) \mapsto dK_p(u_1)(u_1 - u_2) - (K_p(u_1) - K_p(u_2)), 
\end{equation*}
hence, from Prop. \ref{prop:cumulant}(\ref{prop:cumulant4}) it is $C^\infty$ jointly in both variables, and moreover analytic

\begin{equation*}
  E_p(u_1,u_2) = \sum_{n \ge 2} \frac 1{n!}d^n K_p(u_1) (u_1 - u_2)^{\circ n}, \quad \normat {\Phi,p} {u_1 - u_2} < 1. 
\end{equation*}
This regularity result is to be compared with what is available when the restriction $q_1 \smile q_2$ is removed, i.e. the semicontinuity \cite{ambrosio|gigli|savare:2008}.

The (partial) derivative of $u_2 \mapsto E_p(u_1,u_2)$ in the direction $v_2 \in \eBspace p$ is 

\begin{equation*}
  d_2 E_p(u_1,u_2) v_2 = -d K_p(u_1) v_2 + d K_p(u_2) v_2 = \expectat {q_2} {v_2} - \expectat {q_1} {v_2}.
\end{equation*}
If $v_2 = \etransport q p w$, we have $\expectat {q_2} {v_2} - \expectat {q_1} {v_2} = \expectat {q_2} {w} - \expectat {q_1} {w}$ and the covariant derivative of the partial functional $q \mapsto \KL {q_1} q$ is

\begin{equation*}
  D_{2,w} \KL {q_1} q = \expectat {q} {w} - \expectat {q_1} {w} = \expectat q {\left(1 - \frac {q_1}q\right) w}, \quad \nabla_q \KL {q_1} q = 1 - \frac {q_1} q
\end{equation*}
The second mixed derivative of $E_p$ is

\begin{equation*}
  d_1 d_2 E_p(u_1,u_2) (v_1,v_2) = - d^2 K_p (u_1) (v_1,v_2) = - \covat {q_1} {v_1}{v_2}.
\end{equation*}

Equivalently, we consider the mapping $q_1 \mapsto  D_{2,w} \KL {q_1} q$, in the chart $u_1 \mapsto \expectat {q} {w} - \expectat {q_1} {w}$, to obtain

\begin{equation*}
  \left. D_{1,w_1} D_{2,w_2} \KL {q_1}{q_2} \right|_{q_1 = q_2 = q} = - \expectat q {w_1,w_2}.
\end{equation*}

\subsection{Boltzmann entropy.}
\label{sec:boltzmann-entropy}

While our discussion of the Kulback-Leibel divergence in the previous Sec. \ref{sec:kullb-leibl-diverg} does not require any special assumption but the restriction of its domain to a maximal exponential model, in the present discussion of the Boltzmann entropy a further restriction is required. If $p, q$ belong to the same maximal exponential model, $p \smile q$, then from $q = \euler^{u - K_p(u)} \cdot p$ with $u \in \eBspace p$, we obtain $\ln q - \ln p \in L^{\Phi}(p)$, so that $\ln q \in L^{\Phi}(p)$ if, and only if, $\ln p \in L^{\Phi}(p)$.

We study the Boltzmann entropy $E(q) = \expectat q {\lnof q}$ on a maximal exponential model $q\in \mathcal E$ such that for at least one, and hence for all, $p \in \mathcal E$ it holds $\lnof p \in L^{\Phi}(p)$, i.e. $\int \left(p^{1+\alpha}+p^{1-\alpha}\right)\ d\mu < +\infty$ for some $\alpha >0$. This is for example the case when the reference measure is finite and $p$ is constant. Another notable example is the Gaussian case, i.e. the sample space is $\reals^n$ endowed with the Lebesgue measure and $p(x) \propto \exp{-1/2 |x|^2}$. In fact $\int \cosh (\alpha |x|^2) \expof{-1/2 |x|^2} \ dx < +\infty$ for $0 < \alpha < 1/2$.

Under our assumption, the Boltzmann entropy is a smooth function. As

\begin{equation*}
  \lnof q = u - K_p(u) + \lnof p = u - K_p(u) + (\lnof p - E(p)) + E(p) \in L^{\Phi}(p),
\end{equation*}
the representation in the chart centered at $p$ is

\begin{equation*}
  E_p(u) = \expectat {e_p(u)}{u - K_p(u) + \lnof p} = dK_p(u) \left[u + (\lnof p - E(p))\right] - K_p(u) + E(p),
\end{equation*}
hence it is a $C^\infty$ real function. The derivative in the direction $v$ equals

\begin{equation*}
  dE_p(u) v = d^2 K_p(u) \left(u + (\lnof p - E(p)),v \right)= \covat q {u+\lnof p}v,
\end{equation*}
in particular

\begin{equation*}
  dE_p(0)v = \expectat p {(\lnof p - E(p)) v} = \scalarat p {\lnof p - E(p)}{v}.
\end{equation*}
The value of the covariant derivative $D_GE$ at $q$ and $G(q) = w$ is

\begin{equation*}
  dE_p(u) \etransport q p w =  \covat q {u+\lnof p}{w} = \expectat q {((\lnof q  + K_p(u))w} = \expectat q {(\lnof q - E(q)) w}.
\end{equation*}

The gradient $\nabla E(q) \in (\eBspace q)^*$, $D_GE(q) = \scalarat q {\nabla E(q)}{G(q)}$, is identified with a random variable in $\eBspace q \subset \mBspace q$, and

\begin{align*}
  F(q) &= \lnof q - E(q) \\
&=  u - K_p(u) + \lnof p - \expectat q{u - K_p(u) + \lnof p} \\
&=(u  + \lnof p - E(p)) - dK_p(u)(u + \lnof p - E(p)) \\
&= \etransport p q (u  + \lnof p - E(p)) \in \eBspace q
\end{align*}
is a vector field in the tangent bundle $T\mathcal E$, hence a vector field in the Hilbert bundle $H\mathcal E$ and in the pretangent bundle $\prescript{*}{}T\mathcal E$. 

The equation $\nabla E(q) = 0$ implies $q = E(q)$, hence constant. The Boltzmann entropy is increasing along the vector field $G \in T\mathcal E$ if $\expectat q {(\lnof q - E(q))G(q)} = \covat q {\lnof q}{G(q)} > 0$. The exponential family tangent at $p$ to $\nabla E(p)$ is $p(t) \propto \euler^{t\lnof p} \cdot p = p^{1+t}$. The gradient flow equation is $\delta q(t) = \nabla E(q(t))$ that is 

\begin{equation*}
  \derivby t {\lnof{q(t)}} = \ln{q(t)} - E(q(t)).
\end{equation*}

In the pretangent bundle the action of the dual exponential transport $(\etransport qp)^*$ is identified with $\mtransport qp$. It follows that the representation of the gradient in the chart centered at $p$ is

\begin{align*}
F_p(u) &= \euler^{u-K_p(u)}\left[(u  + \lnof p - E(p)) - dK_p(u)(u + \lnof p - E(p))\right] \\ &= \mtransport {e_p(u)}p \etransport p{e_p(u)} [(u  + \lnof p - E(p)).
\end{align*}
Let us assume $u \mapsto F_p(u)$ is (strongly) differentiable and let us compute the derivative by the product rule. As $u \mapsto F_p(u)$ can be seen locally as the product of an analytic mapping $u \mapsto \euler^{u-K_p(u)}$ with values in $L^a(p)$, $a > 1$ because of Prop. \ref{prop:expisanalytic}, while the second factor is an analytic function with values in $L^{\Phi}(p) \subset \cap_{a >1} L^a(p)$, we can compute its differential in the direction $v \in \eBspace p$ as the product of two functions in the Fech\'et space $\cap_{a >1} L^a(p)$ as

\begin{multline*}
d (\nabla E)_p(u) v = \euler^{u-K_p(u)}\  \times \\ \left[(v - d K_p(u) v)\left[(u  + \lnof p - E(p)) - dK_p(u)(u + \lnof p - E(p))\right] \right. \\ \left. + v - d^2 K_p(u) (u+\lnof p-E(p),v) - dK_p(u)v\right] = \\ \frac qp \left[ (v - \expectat q v)(\lnof{q} - E(q)) + v - \expectat q v - \covat q {\lnof q}{v}\right],
\end{multline*}
in particular, for $u=0$,

\begin{align*}
d (\nabla E)_p(0) v &= (\lnof{p} - E(p) + 1)v - \expectat p {\lnof p v} \\
                    &= (\nabla E(p) + 1)v - \expectat p {\nabla E(p)v}.
\end{align*}
The covariant derivative of the gradient $\nabla E$ of the Boltzmann entropy in the pretangent bundle $\prescript{*}{}T\mathcal E$ is

\begin{align*}
  D_G(\nabla E)(p) &= (\lnof p - E(p) + 1)G(p) + \expectat p {\lnof p G(p)} \\ &= (\nabla E(p) + 1)G(p) + \expectat p {\nabla E(p)G(p)}, \quad p \in \mathcal E.
\end{align*}
The existence of the covariant derivative implies $\lnof p G(p) \in L^{\Phi_*}(p)$, $p \in \mathcal E$. We do not discuss here the existence problem.

The computation of the covariant derivative of the same gradient in the tangent bundle $T\mathcal E$ would be

\begin{align*}
  \bar F_p(u) &= \etransport qp (\lnof q - E(q)) = \lnof q - \expectat p {\lnof q} = u + \lnof p - E(p), \\
  d \bar F_p(u) v &= v,
\end{align*}
but we cannot suggest any use of this computation.
\subsection{Boltzmann equation}\label{ex:boltzmann-1} 

Orlicz spaces as a setting for Boltzmann equation has been recently discussed in \cite{majewski|labuschagne:2013}, while the use of exponential manifolds has been suggested in \cite[Example 11]{pistone:2013GSI2013}. Here we further work out this framework for space-homogeneous Boltzmann operator with angular collision kernel $B(z,x) = \absoluteval{x'z}$, see the presentation in \cite{villani:2002review}. In order to avoid a clash with the notations used in other parts of this paper, we use $v$ and $w$ to denote velocities in $\reals^3$ in place of the more common couple $v$ and $v_*$ and the velocities after collision are denoted by $v_{x}$ and $w_x$ instead of $v'$, $v'_*$, $x \in S^2$ being a unit vector.

Let $v,w \in \reals^3$ be the velocities of two particles, and $\bar v, \bar w$ be the velocities after a elastic collision, i.e.

\begin{equation}\label{eq:elastic-collision}
  v + w = \bar v + \bar w, \quad
 \absoluteval {v}^2 +   \absoluteval {w}^2 = \absoluteval {\bar v}^2 + \absoluteval {\bar w}^2.
\end{equation}

Using \eqref{eq:elastic-collision} we derive from the development of  $\absoluteval {v + w}^2 = \absoluteval {\bar v + \bar w}^2$ that $v \cdot w = \bar v \cdot \bar w$. The four vectors $v, w, \bar v, \bar w$ all lie on a circle with center $z = (v+w)/2 = (\bar v + \bar w) / 2$. In fact, the four vectors and $z$ lie on the same plane because $v - z = -(w -z)$, $\bar v -z = -(\bar w -z)$, and moreover $\absoluteval {v - z}^2 = \absoluteval {\bar v - z}^2$. As $v,w,\bar v,\bar w$ form a rectangle, we can denote by $x$ the common unit vector unit of the parallel sides $\bar w - w$ and $v -\bar v$ and write $\bar w - w = v -\bar v$ as the orthogonal projection of $v-w$ on $x$. Given the unit vector $x \in S_2 = \setof{x \in \reals^3}{x'x = 1}$, the collision transformation $(v,w) \mapsto (\bar v, \bar w) = (v_x,w_x)$ is linear and represented by a $\reals^{(3+3)\times(3+3)}$ matrix

\begin{equation}\label{eq:A-x-matrix}
  A_x =
  \begin{bmatrix}
    (I - \Pi_x) & \Pi_x \\ \Pi_x & (I - \Pi_x)
  \end{bmatrix},
\quad
\left\{\begin{aligned}
    v_x &= v - x {x'}(v - w) = (I - x {x'})v + x {x'} w , \\
    w_x &= w + x {x'}(v - w) = x {x'} v + (I - x {x'})w,
  \end{aligned}\right.
\end{equation}
where $'$ denotes the transposed vector.

Given any $x \in S_2$  we have $A_x = A_{-x}$. If $v, w, v_x, w_x$ are as in \eqref{eq:A-x-matrix} then the elastic collision invariants of \eqref{eq:elastic-collision} hold, $v + w = v_x + w_x$, $\absoluteval {v}^2 +   \absoluteval {w}^2 = \absoluteval {v_x}^2 + \absoluteval {w_x}^2$. The components in the direction $x$ are exchanged, $xx'v_x = xx'w$ and $xx'w_x = xx'v$, while the orthogonal components are conserved.

Let $\sigma$ be the uniform probability on $S^2$. For each positive function $g \colon \reals^3 \times \reals^3$ the integral $\integrald {S_2} {g(v_x,w_x)} {\sigma(dx)}$ depends on the collision invariants only. In fact,  

\begin{align*}
  v_x &= \frac{v+w}2 + \frac{\absoluteval{v-w}}2 y, \\
  w_x &= \frac{v+w}2 - \frac{\absoluteval{v-w}}2 y,
\end{align*}
where $y = \versof{v_x - w_x} = (I - 2 xx')\versof{v-w}$ and al other terms depend on the collision invariants, in particular $\absoluteval{v-w}^2 = 2(\absoluteval{v}^2+\absoluteval{w}^2)-\absoluteval{v+w}^2$. 

On the sample space $(\reals^3,dv)$ let $f_0$ be the standard normal density viz. the Maxwell distribution of velocities. As $A_xA_x = I_6$ the identity matrix on $\reals^6$, in particular $\absoluteval{\det A_x} = 1$, we have 

\begin{equation*}
  A_x (V,W) = (V_x,W_x) \sim (V,W)
\end{equation*}
 if $(V,W)$ is N$(0_6,I_6)$. We can give the previous remarks a more probabilistic form as follows.

\begin{proposition}\label{prop:conditioning}
Let $f_0$ the density of the standard normal N$(0_3,I_3)$.
\begin{enumerate}
\item \label{prop:conditioning1}
If $(V,W) \sim f_0\otimes f_0$, then $\integrald{S^2}{g(V_x,W_x)}{\sigma(dx)}$ is the conditional expectation of $g(V,W)$ given $V+W$ and $\absoluteval{V}^2+\absoluteval{W}^2$.
\item \label{prop:conditioning2}
Assume $(V,W) \sim f$, $f \in  \maxexp {f_0\otimes f_0}$,  Then $\integrald{S^2}{f\circ A_x}{\sigma(dx)} \in \maxexp {f_0\otimes f_0}$ and

\begin{equation*}
  \condexp {g(V,W)}{V+W, \absoluteval{V}^2+\absoluteval{W}^2} = \frac{\integrald{S^2}{g(V_x,W_x)f(V_x,W_x)}{\sigma(dx)}}{\integrald{S^2}{f(V_x,W_x)}{\sigma(dx)}}
\end{equation*}
\end{enumerate}
\end{proposition}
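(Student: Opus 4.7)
The plan is to prove part~(\ref{prop:conditioning1}) first and then deduce part~(\ref{prop:conditioning2}). Set $p = f_0\otimes f_0$ and $\mathcal G = \sigma(V+W,\absoluteval V^2 + \absoluteval W^2)$. For~(\ref{prop:conditioning1}) I verify the two defining properties of conditional expectation. The $\mathcal G$-measurability of $\integrald{S^2}{g(V_x,W_x)}{\sigma(dx)}$ was essentially established in the discussion preceding the proposition: the parametrization via $y = (I-2xx')\versof{v-w}$ shows that as $x$ ranges over $S^2$ the integrand depends on $(v,w)$ only through the collision invariants. For the projection property, take any bounded $\mathcal G$-measurable $h$; combining $A_x(V,W) \sim (V,W)$ under $p$ (noted just before the proposition) with the $A_x$-invariance of $(V+W,\absoluteval V^2 + \absoluteval W^2)$ gives
\begin{equation*}
\expectof{h\cdot g(V,W)} = \expectof{h\cdot g(V_x,W_x)} \quad \text{for every } x\in S^2,
\end{equation*}
and integrating the right side against $\sigma(dx)$ and applying Fubini yields the projection.

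For~(\ref{prop:conditioning2}) write $f = \euler^{u - K_p(u)}\cdot p$ with $u \in \sdomain p$. Since $A_x$ is orthogonal, $p \circ A_x = p$, so $f \circ A_x = \euler^{u\circ A_x - K_p(u)}\cdot p$ and hence
\begin{equation*}
q := \integrald{S^2}{f\circ A_x}{\sigma(dx)} = \left(\integrald{S^2}{(f/p)\circ A_x}{\sigma(dx)}\right)\cdot p.
\end{equation*}
Applying~(\ref{prop:conditioning1}) with $f/p$ in place of $g$ identifies the parenthesized factor at $(V,W)$ with the $p$-conditional expectation of $f/p$ given $\mathcal G$. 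To conclude $q \in \maxexp p$ I would use characterization~(\ref{prop:maxexp-pormanteaux-1}) of Proposition~\ref{prop:maxexp-pormanteaux}, which reduces the problem to showing $\expectat p {(q/p)^t} < +\infty$ on an open neighborhood of $[0,1]$. Because $f \in \maxexp p$, there is $\epsilon > 0$ with $\expectat p {(f/p)^t} < +\infty$ for $t \in (-\epsilon, 1+\epsilon)$. For $t \ge 1$ and $t \le 0$ convexity of $x\mapsto x^t$ and the conditional Jensen inequality give $\expectat p {(q/p)^t}\le \expectat p {(f/p)^t} < +\infty$. For $0 < t < 1$ Jensen points the wrong way, but the unconditional bound $\expectat p {(q/p)^t} \le (\expectat p {q/p})^t = 1$ still closes the interval, because $\integral q {d\mu} = 1$.

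Finally, the conditional expectation identity follows from the Bayes-type formula
\begin{equation*}
\condexp{g(V,W)}{\mathcal G} = \frac{\expectat p {g\cdot(f/p)\mid \mathcal G}}{\expectat p {(f/p)\mid \mathcal G}}, \qquad (V,W)\sim f,
\end{equation*}
which one verifies by the standard tower argument using $\mathcal G$-measurability of the test function and the Radon--Nikodym derivative $f/p$. Part~(\ref{prop:conditioning1}) converts both numerator and denominator into integrals against $\sigma(dx)$, and the identity $p\circ A_x = p$ lets $p(V_x, W_x) = p(V,W)$ cancel in both integrands, leaving the stated ratio. The main technical obstacle is the membership $q \in \maxexp p$: the sub-range $0 < t < 1$ is where the natural Jensen transfer fails and one must fall back on the trivial bound using $\integral q {d\mu} = 1$; everything else is bookkeeping with the measure-preserving property of $A_x$.
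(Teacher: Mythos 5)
Your argument tracks the paper's proof almost step for step: part~(\ref{prop:conditioning1}) by checking the two defining properties of conditional expectation (measurability deferred to the discussion preceding the proposition, the projection property from $A_x(V,W)\sim(V,W)$ together with $m_i\circ A_x=m_i$), and part~(\ref{prop:conditioning2}) by a Jensen argument for the convex exponents plus Proposition~\ref{prop:maxexp-pormanteaux}, closing with Bayes' formula. You are in fact more complete than the paper in two spots: the paper treats only $t<0$ and $t>1$ and never says why $0<t<1$ is harmless (your bound $\expectat p{(q/p)^t}\le\left(\expectat p{q/p}\right)^t=1$, or simply $x^t\le 1+x$, fills that in), and it compresses the final ratio identity into the single phrase ``Bayes' formula for conditional expectation'' where you spell out the tower argument and the cancellation of $p\circ A_x=p$. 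One small remark: your appeal to \emph{conditional} Jensen for $t<0$, $t>1$ quietly uses part~(\ref{prop:conditioning1}); the paper's version, plain Jensen with respect to the probability measure $\sigma(dx)$ followed by Fubini and $A_x(V,W)\sim(V,W)$, reaches the same bound without that dependence and is preferable.

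The step you should not take on faith is exactly the one you delegate to the preceding discussion: the $\mathcal G$-measurability of $\integrald{S^2}{g(V_x,W_x)}{\sigma(dx)}$. Since $\versof{v_x-w_x}=(I-2xx')\versof{v-w}=:y$, that average is the integral of $g$ against the image of the uniform measure $\sigma(dx)$ under $x\mapsto(I-2xx')\versof{v-w}$, and this image measure is axially symmetric about $\versof{v-w}$ but \emph{not} uniform: one has $y\cdot\versof{v-w}=1-2\bigl(x\cdot\versof{v-w}\bigr)^2$, and since $x\cdot\versof{v-w}$ is uniform on $[-1,1]$ by Archimedes' theorem, $y\cdot\versof{v-w}$ has density $\bigl(2\sqrt{2(1-s)}\bigr)^{-1}$ rather than $1/2$. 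Hence the average retains a dependence on the direction $\versof{v-w}$, which is not a function of $\bigl(v+w,\absoluteval v^2+\absoluteval w^2\bigr)$; concretely, for $g(v,w)=\bigl(\versof{v-w}\cdot e_1\bigr)^2$ the $\sigma$-average equals $\tfrac15\bigl(\versof{v-w}\cdot e_1\bigr)^2+\tfrac4{15}$, which separates $(v,w)=(e_1,-e_1)$ from $(e_2,-e_2)$ although they share the same collision invariants. The projection property you verify is genuine, so the $\sigma$-average has the correct integrals against $\mathcal G$-measurable tests, but it is not itself $\mathcal G$-measurable; the identity holds only after reweighting $\sigma(dx)$ by $2\absoluteval{x'\versof{v-w}}$, the Jacobian of the two-to-one map $x\mapsto y$ (the familiar $\omega$- versus $\sigma$-representation discrepancy of the collision operator). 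This defect sits in the paper's own proof as much as in yours, but since your write-up explicitly rests on that earlier discussion, the gap is inherited rather than closed, and it propagates to the ratio formula in part~(\ref{prop:conditioning2}).
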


\begin{proof} \ref{prop:conditioning1}. The random variable $\integrald{S^2}{g(V_x,W_x)}{\sigma(dx)} = \integrald{S^2}{g\circ A_x(V,W)}{\sigma(dx)}$ is a function $\tilde g(m_1(V,W),m_2(V,W))$ with $m_1(V,W) = V+W$ and
$m_2(V,W) = \absoluteval{V}^2+\absoluteval{W}^2$. For all $h_1 \colon \reals^3$, $h_2 \colon \reals^3$, 

\begin{multline*}
\expectof{\left(\integrald{S^2}{g\circ A_x(V,W)}{\sigma(dx)}\right)h_1(m(V,W)h_2(m_2(V,W)))}
= \\ \expectof{g(V,W)h_1(m(V,W)h_2(m_2(V,W)))}
\end{multline*}
because of $A_x (V,W) \sim (V,W)$ and $m_1\circ A_x = m_1$, $m_2 \circ A_x = m_2$.

\ref{prop:conditioning2}. We use Prop. \ref{prop:maxexp-pormanteaux}. If $f \in \maxexp {f_0\otimes f_0}$, then 

\begin{equation*}
  f = \euler^{u - K_0(u)} \cdot f_0 \otimes f_0, u \in \sdomain {f_0 \otimes f_0}
\end{equation*}
and there exists a neighborhood $I$ of $[0,1]$ where the one dimensional exponential family

\begin{equation*}
  f_t = \euler^{tu - K_0(tu)} \cdot f_0 \otimes f_0, \quad t \in I
\end{equation*}
exists. To show $\expectat {f_0 \otimes f_0}{(f/f_0\otimes f_0)^t} < + \infty$ for $t \in I$ it is enough to consider the convex cases $t < 0$ and $t> 1$. We have

\begin{equation*}
  \integrald {S_2} {f \circ A_x} {\sigma(dx)} = \integrald {S_2} {\euler^{fu\circ A_x - K_0(u)}} {\sigma(dx)} \cdot f_0 \otimes f_0
\end{equation*}
and in the convex cases

\begin{multline*}
  \expectat {f_0 \otimes f_0} {\left(\frac{\integrald {S_2} {f \circ A_x} {\sigma(dx)}}{f_0 \otimes f_0}\right)^t} = \expectat {f_0 \otimes f_0} {\left(\integrald {S_2} {\euler^{u \circ A_x - K_0(u)}} {\sigma(dx)}\right)^t} \le \\ \expectat {f_0 \otimes f_0} {\integrald {S_2} {\euler^{tu \circ A_x - tK_0(u)}} {\sigma(dx)}} = \expectat {f_0 \otimes f_0} {\euler^{tu  - tK_0(u)}} = \euler^{K_0(tu) - t K_0(u)}
\end{multline*}

The last equation if Bayes' formula for conditional expectation.
\end{proof}

\begin{definition}
For each element of the maximal exponential model containing $f_0$, $f \in\maxexp{f_0}$, the Boltzmann operator is

  \begin{multline*}
Q(f)(v) = \\
\int_{\reals^3}\int_{S^2} (f(v - x {x'}(v - w))f(w + x {x'}(v - w))-f(v)f(w))\absoluteval{{x'}(v - w)}\ \sigma(dx)\ dw,    
\end{multline*}
\end{definition}

In our definition we have restricted the domain of the Boltzmann operator to a maximal exponential model containing the standard normal density in order to fit into our framework and be able to prove the smoothness of the operator. The maximal exponential model $\maxexp {f_0}$ contains all normal densities $f \sim \text{N}(\mu,\Sigma)$. It has other peculiar properties.

As $f \in \maxexp{f_0}$, $f = \euler^{u - K_0(u)} \cdot f_0$, $u$ belongs to the interior of the proper domain of $K_0$, $u \in \mathcal S_{f_0} \subset \eBspace{f_0}$. It follows from Prop. \ref{prop:maxexp-pormanteaux} that we have the equality and isomorphism of the Banach spaces $L^{\Phi}(f)$ and $L^{\Phi}(f_0)$. For the random variable $V_a \colon v \mapsto \absoluteval v ^a$ it holds $V_\alpha  \in L^{\Phi}(f_0) = L^{\Phi}(f)$ for all $a \in [1,2]$. In fact,

\begin{equation*}
  \expectat {f_0} {\cosh(\alpha V_a)} = (2\pi)^{-3/2} \int_{\reals^3} \cosh(\alpha\absoluteval v ^a) \expof{-\absoluteval v^2/2}\ dv
\end{equation*}
is finite for all $\alpha$ if $a \in [0,2[$ and for $\alpha < 1/2$ if $a = 2$. In particular, it follows that $V_1(v) =\absoluteval v$ has finite moments with respect to $f$, $\int \absoluteval v^n  f(v) \ dv < + \infty$, $n = 1,2,\dots$. 
%
%

As $x'(v - w) = -x'(v_x - w_x)$, the measure $\absoluteval{{x'}(v - w)}\ dvdw$ is invariant under the transformation $A_x$ and the measure $f(v_x)f(w_x)\absoluteval{x'(v-w)}\ dvdw$ is the image of $f(v)f(w)\absoluteval{x'(v-w)}\ dvdw$ under $A_x$. Other properties are obtained in the proof of the following proposition.

\begin{proposition}\label{prop:Boltzmann-field}
Let $f_0(v) = (2\pi)^{-3/2}\expof{-\absoluteval v ^2 / 2}$ and $f \in \maxexp{f_0}$. Then $Q(f)/f \in \mBspace f$. Then $f \mapsto Q(f)/f$ is a vector field in the pretangent bundle $\prescript{*}{}T\maxexp {f_0}$ called \emph{Boltzmann field}.
\end{proposition}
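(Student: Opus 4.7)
The fiber $\mBspace f$ consists of centered elements of $L^{\Phi_*}(f)$, so there are exactly two things to verify: (i) the centering $\int_{\reals^3}Q(f)(v)\,dv=0$ (equivalently $\expectat f{Q(f)/f}=0$), and (ii) the integrability $Q(f)/f\in L^{\Phi_*}(f)$. Once both are in hand, the pair $(f,Q(f)/f)$ lives in $\prescript{*}{}T\maxexp{f_0}$ by the definition of the pretangent bundle, and the claim follows.

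For (i) I would split $Q=Q^+-Q^-$ into gain and loss terms and invoke the classical Boltzmann symmetry. Since $A_x$ in \eqref{eq:A-x-matrix} is an orthogonal involution with $|\det A_x|=1$ and satisfies $x'(v_x-w_x)=-x'(v-w)$, the change of variables $(v,w)\mapsto(v_x,w_x)$ leaves $|x'(v-w)|\,dv\,dw\,\sigma(dx)$ invariant and converts the full $dv\,dw\,\sigma(dx)$-integral of $Q^+$ into that of $Q^-$, whence $\int Q(f)\,dv=0$.

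For (ii), the chain $L^\Phi(f)\to L^a(f)\to L^b(f)\to L^{\Phi_*}(f)$ from Sec.~\ref{sec:model-spaces} reduces the problem to producing an $L^b(f)$-bound for some $b>1$. The loss term is immediate: $\int_{S^2}|x'z|\,\sigma(dx)=c_0|z|$ gives
\begin{equation*}
Q^-(f)/f(v)=c_0\int|v-w|f(w)\,dw\le c_0\bigl(V_1(v)+\expectat f{V_1}\bigr),
\end{equation*}
and since $V_1\in L^\Phi(f)$ with finite $\expectat f{V_1}$ (established in the paragraph preceding the proposition), the loss term already lies in $L^\Phi(f)\subset L^b(f)$ for every $b\ge 1$.

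The crux is the gain term. I would exploit the Gaussian conservation $|v|^2+|w|^2=|v_x|^2+|w_x|^2$, which forces $f_0(v)f_0(w)=f_0(v_x)f_0(w_x)$; writing $f=\euler^{u-K_0(u)}\cdot f_0$ then cancels the Gaussian factors and gives
\begin{equation*}
\frac{f(v_x)f(w_x)}{f(v)f(w)}=\euler^{u(v_x)+u(w_x)-u(v)-u(w)},
\end{equation*}
so that
\begin{equation*}
Q^+(f)/f(v)=\int_{\reals^3}\int_{S^2} f(w)\,\euler^{u(v_x)+u(w_x)-u(v)-u(w)}\,|x'(v-w)|\,\sigma(dx)\,dw.
\end{equation*}
Because $u\in\sdomain{f_0}$ lies in the interior of the proper domain of $K_0$, Prop.~\ref{prop:expisanalytic} supplies $\epsilon>0$ for which $\euler^{(1+\epsilon)|u|}$ is $f_0$-integrable, and by Prop.~\ref{prop:maxexp-pormanteaux}(\ref{prop:maxexp-pormanteaux-5}) also $f$-integrable; combining this with H\"older's inequality and the polynomial domination $|x'(v-w)|\le V_1(v)+V_1(w)\in L^\Phi(f)$ yields the required $L^b(f)$-bound for $b$ close to $1$. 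The main obstacle is precisely this gain-term estimate: everything else reduces to the Maxwellian symmetry that removes the Gaussian factors and reduces the exponent to the single random variable $u$, for which the Orlicz machinery of Sec.~\ref{sec:model-spaces} is tailor-made.
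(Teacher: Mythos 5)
Your skeleton matches the paper's: split $Q$ into gain and loss parts, obtain $\int Q(f)\,dv=0$ from the $A_x$-invariance of $\absoluteval{x'(v-w)}\,dv\,dw\,\sigma(dx)$, bound the loss term by moments of $\absoluteval v$, and attack the gain term through the Maxwellian cancellation $f_0(v_x)f_0(w_x)=f_0(v)f_0(w)$. The loss-term estimate is fine and in fact slightly stronger than the paper's (you land in $L^{\Phi}(f)$, while the paper gets an $L^{\Phi_*}$ bound via Jensen and \eqref{eq:delta2}). The gap is exactly where you locate the crux. First, the lemma you invoke is false in general: $u\in\sdomain{f_0}$ guarantees $\expectat{f_0}{\euler^{tu}}<+\infty$ only for $t$ in a neighborhood of $[0,1]$, not for $\absoluteval t\le 1+\epsilon$, so $\euler^{(1+\epsilon)\absoluteval u}$ need not be $f_0$-integrable; moreover Prop.~\ref{prop:expisanalytic} concerns the open unit ball of $L^{\Phi}(p)$, not all of $\sdomain{f_0}$. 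Second, the $L^b$ reduction with $b>1$ is delicate precisely because raising $\euler^{u(v_x)+u(w_x)-u(v)-u(w)}$ to the power $b$ and recombining with $f(v)f(w)$ produces exponents $b>1$ and $1-b<0$ on $u$ at the four points; a routine H\"older step (e.g.\ Cauchy--Schwarz) lands you at exponent $2b\approx 2$, outside the guaranteed domain of the moment generating functional. Your route can be salvaged by tuning the H\"older exponents so that every coefficient of $u$ stays inside the neighborhood of $[0,1]$, but that bookkeeping is the entire content of the estimate and it is absent.

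The paper sidesteps this by never raising the exponential above the first power: it bounds $\Phi_*\left(Q_+(f)/f_0\right)$ by Jensen against the probability measure $f_0(v)\,dv\,\sigma(dx)$, extracts the kernel with $\Phi_*(ab)\le(a\vee a^2)\Phi_*(b)$, applies $\Phi_*(\euler^s)\le s\euler^s+1$ coming from \eqref{eq:phistarent}, and then uses the $A_x$-invariance of $f_0\otimes f_0$ to rewrite the resulting integrand as $f(v)f(w)\left(u(v)+u(w)-2K_0(u)\right)$ times a polynomial weight. The only integrability inputs are then $u\in L^{\Phi}(f)$ and the moments of $\absoluteval v$ under $f$, both already established before the proposition. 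Either adopt that estimate or carry out the exponent bookkeeping in your H\"older argument explicitly.
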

\begin{proof}
Let us consider first the second part of the Boltzmann operator

\begin{equation*}
  Q_-(f)(v) = \int_{\reals^3}\int_{S^2} f(v)f(w)\absoluteval{{x'}(v - w)}\ \sigma(dx)\ dw
= f(v) \int_{\reals^3}f(w) \left(\int_{S^2} \absoluteval{{x'}(v - w)}\ \sigma(dx)\right)\ dw.
\end{equation*}

Note that from inequality \eqref{eq:delta2}

\begin{equation*}
  \Phi_*\left(\int_{S^2} \absoluteval{{x'}(v - w)}\ \sigma(dx)\right) = \Phi_*\left(\absoluteval{v-w} \int_{S^2} \absoluteval{x_1} \ \sigma(dx)\right) \le \left(\int_{S^2} \absoluteval{x_1} \ \sigma(dx)\right)^2 \Phi_*\left(\absoluteval{v-w}\right).
\end{equation*}

We prove $Q_-(f)/f \in L^{\Phi_*}(f)$:

\begin{align*}
  \expectat f {\Phi_*\left(\frac{Q_-(f)}f\right)} &= \int_{\reals^3} \ dv f(v) \Phi_*\left(\int_{\reals^3}f(w) \left(\int_{S^2} \absoluteval{{x'}(v - w)}\ \sigma(dx)\right)\ dw\right) \\ &\le
\int_{\reals^3} \ dv f(v) \int_{\reals^3}\ dw f(w) \Phi_*\left(\int_{S^2} \absoluteval{{x'}(v - w)}\ \sigma(dx)\right) \\ &= \int_{\reals^3} \ dv f(v) \int_{\reals^3}\ dw f(w) \Phi_*\left(b \absoluteval{v-w}\right) \\ &\le \frac{b^2}2 \int_{\reals^3}\int_{\reals^3}\ dvdw\  f(v)f(w) \absoluteval{v-w}^2,
\end{align*}
which is finite as $\absoluteval{v-w}^2 \le 2(\absoluteval u^2 + \absoluteval v^2)$.

We consider now the first part of the Boltzmann operator

\begin{align*}
  Q_+(f)(v) &= 
\integrald{\reals^3}{\integrald{S^2} {f(v - x {x'}(v - w))f(w + x {x'}(v - w))\absoluteval{{x'}(v - w)}}{\sigma(dx)}}{dw}
\\ &= 
\integrald{\reals^3}{\integrald{S^2} {f(v_x)f(w_x)\absoluteval{{x'}(v - w)}}{\sigma(dx)}}{dw}
\end{align*}
We want to prove that $Q_+(f)/f \in L^{\Phi_*}(f)$ or, equivalently, $Q_+(f)/f_0 \in L^{\Phi_*}(f_0)$. As $f \in \maxexp {f_0}$, we can write $f$ as $f = \euler^{u - K_0(u)} \cdot {f_0}$, where $u \in \eBspace{f_0}$, so that

\begin{equation*}
  Q_+(f)(w) = f_0(w) \int_{S^2} \sigma(dx) \int_{\reals^3} dv f_0(v) \euler^{u(v_x)+u(w_x)-2K_0(u)} \absoluteval{{x'}(v - w)}
\end{equation*}
and
 
\begin{align*}
\Phi_*\left(\frac{Q_+(f)(w)}{f_0(w)}\right) &\le \int_{S^2} \sigma(dx) \int_{\reals^3} dv f_0(v) \Phi_*\left(\euler^{u(v_x)+u(w_x)-2K_0(u)} \absoluteval{{x'}(v - w)}\right) \\ &\le \int_{S^2} \sigma(dx) \int_{\reals^3} dv f_0(v) L(\absoluteval{{x'}(v - w)})\Phi_*\left(\euler^{u(v_x)+u(w_x)-2K_0(u)}\right)
\end{align*}
where $L(a) = a \vee a^2$. It follows

\begin{multline*}
\Phi_*\left(\frac{Q_+(f)(w)}{f_0(w)}\right) \\ \le \int_{S^2} \sigma(dx) \int_{\reals^3} dv f_0(v) L(\absoluteval{{x'}(v - w)})\left((u(v_x)+u(w_x)-2K_0(u))\euler^{u(v_x)+u(w_x)-2K_0(u)}+1\right)
\end{multline*}
and

\begin{multline*}
\expectat {f_0} {\Phi_*\left(\frac{Q_+(f)}{f_0}\right)} 
\\ \le \iint_{\reals^3} dvdw f_0(v) f_0(w)L(\absoluteval{{x'}(v - w)})\left((u(v)+u(w)-2K_0(u))\euler^{u(v)+u(w)-2K_0(u)}+1\right)
\\ \le \iint_{\reals^3} dvdw f(v) f(w)L(\absoluteval{{x'}(v - w)})\left(u(v)+u(w)-2K_0(u)\right) \\ + \iint_{\reals^3} dvdw f_0(v) f_0(w)L(\absoluteval{{x'}(v - w)})
\end{multline*}
where both terme are finite.

Finally, the integral of the Boltzmann operator is zero:

  \begin{multline*}
\int_{\reals^3} Q(f)(v)\ dv= \\ \int_{S^2} \int_{\reals^3} \int_{\reals^3}(f(v_x)f(w_x)-f(v)f(w))\absoluteval{{x'}(v - w)}\ dw\ dv\ dx = \\
\int_{S^2} \int_{\reals^3} \int_{\reals^3}f(v_x)f(w_x)\absoluteval{{x'}(v_x - w_x)}\ dw_x\ dv_x\ dx - \\ \int_{S^2} \int_{\reals^3} \int_{\reals^3}f(v)f(w))\absoluteval{{x'}(v - w)}\ dw\ dv\ dx = 0.   
  \end{multline*}
\end{proof}

The smoothness of the Boltzmann field could be studied by carefully analyzing the structure of the operator as superposition of

\begin{enumerate}
\item Product: $\maxexp {f_0} \ni f \mapsto f\otimes f \in \maxexp{f_0\otimes f_0}$;
\item Interaction: $\maxexp{f_0\otimes f_0} \ni f\otimes f \mapsto g = B f\otimes f \in \maxexp{f_0\otimes f_0}$;
\item Conditioning: $\maxexp{f_0\otimes f_0} \ni g \mapsto \integrald {S_2}{g\circ A_x}{\sigma(dx)} \in \maxexp{f_0\otimes f_0}$;
\item Marginalization.
\end{enumerate}

The single operations of the chain are discussed in \cite{pistone|rogantin:99}. We do not do this analysis here, and conclude the section by rephrasing in our language the Maxwell form of the Boltzmann operator.

\begin{proposition}\label{prop:maxwell}
  Let $f \in \maxexp{f_0}$ and $g \in L^{\Phi}(f)$. Then $Ag$ defined by
 
 \begin{equation*}
    Ag(v,w) = \integrald{S^2}{\frac12(g(v_x)+g(w_x))}{\sigma(dx)} - \frac12(g(v)+g(w))
  \end{equation*}
belongs to $L^{\Phi}(f\otimes f)$ and

  \begin{equation*}
    \scalarat f g {Q(f)/f} = \expectat {f\otimes f} {Ag}.
  \end{equation*}
Especially, if $f = \euler^{u - K_0(u)} \cdot f_0$

  \begin{equation*}
    \scalarat f u {Q(f)/f} = \expectat {f\otimes f} {A\left(\frac{f}{f_0}\right)}.
  \end{equation*}
\end{proposition}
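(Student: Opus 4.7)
The plan is to split the proposition into two tasks: the Orlicz-space membership $Ag \in L^{\Phi}(f\otimes f)$, and the duality identity linking $\scalarat f g {Q(f)/f}$ to $\expectat{f\otimes f}{Ag}$. Both rely on the invariance properties of the elastic collision map $A_x$ of~\eqref{eq:A-x-matrix} together with the Orlicz-space identifications already supplied by Proposition~\ref{prop:maxexp-pormanteaux}.

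For the first task I would reduce to the Gaussian reference. Since $f \smile f_0$ and the tensor-product argument of Proposition~\ref{prop:conditioning}(\ref{prop:conditioning2}) gives $f\otimes f \smile f_0\otimes f_0$, Proposition~\ref{prop:maxexp-pormanteaux}(\ref{prop:maxexp-pormanteaux-5}) identifies $L^{\Phi}(f\otimes f)$ with $L^{\Phi}(f_0\otimes f_0)$ as normed spaces, so it is enough to bound $Ag$ in $L^{\Phi}(f_0 \otimes f_0)$. Decompose $Ag = B - C$, where
\begin{equation*}
  B(v,w) = \int_{S^{2}}\tfrac{1}{2}(g(v_{x})+g(w_{x}))\,\sigma(dx), \qquad C(v,w) = \tfrac{1}{2}(g(v)+g(w)),
\end{equation*}
and use convexity of $\Phi=\cosh-1$ to reduce to bounding $\expectat{f_0\otimes f_0}{\Phi(\alpha B)}$ and $\expectat{f_0\otimes f_0}{\Phi(\alpha C)}$ separately, for $\alpha>0$ small enough. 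The bound on $C$ follows from the convexity estimate $\Phi(\tfrac12(a+b))\le\tfrac12(\Phi(a)+\Phi(b))$ and Fubini, reducing to $\expectat{f_0}{\Phi(\alpha g)}$, which is finite because $g \in L^{\Phi}(f)=L^{\Phi}(f_0)$. For $B$, Jensen's inequality applied to the inner $\sigma$-average, followed by the $A_x$-invariance of $f_0\otimes f_0$ (guaranteed by the energy-conservation identity $|v|^{2}+|w|^{2}=|v_{x}|^{2}+|w_{x}|^{2}$ together with $|\det A_x|=1$), collapses the bound once more to $\expectat{f_0}{\Phi(\alpha g)}$.

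For the identity, expand $\scalarat f g {Q(f)/f} = \int g(v)\,Q(f)(v)\,dv$ and run the classical symmetrisation for the Boltzmann operator. Apply the measure-preserving involution $(v,w)\mapsto(v_{x},w_{x})$ to the gain term: since this map has $|\det A_x|=1$ and leaves $|x'(v-w)|$ invariant, it rewrites the gain as $\iiint g(v_x) f(v)f(w) |x'(v-w)|\,\sigma(dx)\,dv\,dw$. Subtracting the loss term and then symmetrising in $v\leftrightarrow w$ (which swaps $v_x$ and $w_x$ and preserves $f(v)f(w)|x'(v-w)|$) produces the antisymmetric integrand $\tfrac12(g(v_x)+g(w_x))-\tfrac12(g(v)+g(w))$, which after integration against $f\otimes f$ is exactly $\expectat{f\otimes f}{Ag}$. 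The specialisation to $g = u$ with $f/f_0$ on the right-hand side follows using $u - K_0(u) = \ln(f/f_0)$, the fact that $A$ annihilates additive constants, and that $\int Q(f)\,dv = 0$ (proven at the end of the proof of Proposition~\ref{prop:Boltzmann-field}), so that additive constants in the test function drop out of $\scalarat f \cdot {Q(f)/f}$.

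The main obstacle is the Orlicz estimate for $B$. It is essential that the reference measure $f_0\otimes f_0$ be Gaussian, so that $A_x$ acts as a measure-preserving bijection on the reference and the two-variable tail bound collapses to a one-variable exponential moment of $g$. Dropping the Gaussian reference would require controlling the Radon--Nikodym derivative $(f\otimes f)\circ A_x/(f\otimes f)$ by a Hölder-type estimate at the cost of shrinking $\alpha$, in the spirit of the manipulations already carried out for $Q_{+}(f)/f$ in the proof of Proposition~\ref{prop:Boltzmann-field}.
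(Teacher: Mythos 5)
The paper states Proposition \ref{prop:maxwell} without any proof, so there is no in-house argument to compare against; I can only assess your proposal on its own terms. Your first half is sound: $f\smile f_0$ gives $f\otimes f\smile f_0\otimes f_0$, so Proposition \ref{prop:maxexp-pormanteaux}(\ref{prop:maxexp-pormanteaux-5}) lets you work over the Gaussian reference, and the chain of estimates (Jensen over the probability $\sigma$, convexity of $\Phi$ to split $g(v_x)+g(w_x)$, invariance of $f_0\otimes f_0$ under $A_x$ via energy conservation and $\absoluteval{\det A_x}=1$) reduces everything to $\expectat{f_0}{\Phi(\alpha g)}<+\infty$, which holds since $L^{\Phi}(f)=L^{\Phi}(f_0)$. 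That part is complete in outline.

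The gap is in the second half. The pre/post-collisional change of variables and the $v\leftrightarrow w$ symmetrization are the right moves, but what they actually produce is
\begin{equation*}
\scalarat f g {Q(f)/f} = \iint f(v)f(w)\left(\integrald{S^2}{\left(\tfrac12(g(v_x)+g(w_x))-\tfrac12(g(v)+g(w))\right)\absoluteval{x'(v-w)}}{\sigma(dx)}\right) dv\,dw,
\end{equation*}
that is, the collision kernel $\absoluteval{x'(v-w)}$ remains inside the angular integral. It does not cancel, and it is absent from the displayed definition of $Ag$, so the integrand you obtain is \emph{not} $Ag(v,w)$; the step where you assert the symmetrized expression ``is exactly $\expectat{f\otimes f}{Ag}$'' is where the proof breaks. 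Either $A$ must be redefined to carry the weight $\absoluteval{x'(v-w)}$ (in which case the membership argument also needs the extra factor $L(\absoluteval{x'(v-w)})$ handled as in the proof of Proposition \ref{prop:Boltzmann-field}), or the identity as printed holds only for a constant (Maxwellian) kernel; a careful proof must confront this rather than assert it away. The same applies to the final display: your own reasoning ($u=\lnof{f/f_0}+K_0(u)$, $A$ annihilates constants, $\int Q(f)\,dv=0$) yields $\expectat{f\otimes f}{A(\lnof{f/f_0})}$, not $\expectat{f\otimes f}{A(f/f_0)}$ as printed, and you should have flagged this discrepancy instead of stating that the printed formula ``follows''.
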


\section{Conclusion and Discussion}
\label{sec:conclusion}

We have shown that a careful consideration of the relevant functional analysis allows to discuss some basic features of statistical models of interest in Statistical Physics in the framework of the nonparametric Information Geometry based on Orlicz spaces. In particular, we have defined the exponential statistical manifold and its vector bundles, namely the tangent bundle, the pretangent bundle, the Hilbert bundle. Partial results are obtained on connections, which is a topic considered by many Authors the very core of statistical manifolds theory. 

For example, the Boltzmann equation takes the form of an evolution equation for the Boltzmann field

\begin{equation*}
  \delta f_t = \frac{Q(f_t)}{f_t}, \quad \delta f \in T\maxexp{f_0}, \frac{Q(f)}{f} \in \prescript{*}{}T\maxexp{f_0},
\end{equation*}
and we can compute the covariant derivative of Boltzmann entropy along the Boltzmann field $D_{Q(f)/f} E(f) = \scalarat f {Q(f)/f}{\lnof f - E(f)}$ with Prop. \ref{prop:maxwell}, cfr. \cite[Ch. 3]{villani:2002review}. Our treatment of Boltzmann entropy and Boltzmann equation doen not add any new result, but our aim is to transform a generic geometric intuition about the geometry of probability densities into a formal geometrical methodology.

A number of issues remain open, in particular the proper topological setting of the second order structures and the proper definition of sub-manifold, an important topic which is not mentioned at all in this paper. 

In the case of the pretangent bundle, we have been able to show that it is actually the tangen bundle of an extension of the exponential manifold, the mixture manifold, $\prescript{*}{}T\pdensities \to T\sdensities$. It has been the object of much research the construction of a manifold whose tangent space would be the Hilbert bundle. In some sense the answer is known because of the embedding $p \mapsto \sqrt p$ that maps positive densities $\pdensities$ into the unit sphere $S_\mu$, but a proper definition of the charts is difficult in this setting. 

It has been suggested to use functions called deformed exponentials to mimic the theory of exponential families, see the monograph \cite{naudts:2011GTh}, and also \cite{pistone:2009EPJB}, \cite[Sec. 5]{pistone:2013GSI2013}. An example of deformed exponential is

\begin{equation*}
  \exp_d (u) = \left(\frac 12 u + \sqrt{1 + \frac14 u^2} \right)^2
\end{equation*}
which is a special case of the class introduced in \cite{kaniadakis:2002PhRE,kaniadakis:2005PhRE}. 

The function $\exp_d$ maps $\reals$ onto $\reals_>$, is increasing, convex, and 

\begin{equation*}
  \Phi_d(u) = \frac 12 (\exp_d(u) + \exp_d(-u) ) - 1 = \frac12 u^2.
\end{equation*}

The Young conjugate is $\Phi_{d,*} = \Phi_d$ and the Orlicz space is $L^{\Phi_d}(p) = L^2(p)$
A nonparametric exponential family around the positive density $p$ was defined by \cite{vigelis|cavalcante:2011} to be

\begin{equation*}
  q = \exp_d \left(u - K_p(u) + \ln_d p\right),
\end{equation*}
where

\begin{equation*}
  \ln_d(v) = \exp_d^{-1}(v) = v^{1/2} - v^{-1/2}.
\end{equation*}
If we assume $\expectat {\bar p} u = 0$, where $\bar p$ is a suitable density associate to $p$, then 

\begin{equation*}
  K_p(u) = \expectat {\bar p} {\ln_d p - \ln_d q}.
\end{equation*}

An account of this research in progress will be publisher elsewhere. We conclude by mentioning the different nonparametric approach of \cite{ay|jost|le|schwachhofer:2013arXiv1207.6736}.
 

\section*{Acknowledgements}

This research was supported by the de Castro Statistics Initiative, Collegio Carlo Alberto, Moncalieri. I wish to thank the Guest Editor Antonio Scafone for suggesting me to present this contribution and Bertrand Lods for helpful conversations of the Boltzmann equation.

\bibliographystyle{mdpi}
\makeatletter
\renewcommand\@biblabel[1]{#1. }
\makeatother

\begin{thebibliography}{-------}
\providecommand{\natexlab}[1]{#1}

\bibitem[Amari and Nagaoka(2000)]{amari|nagaoka:2000}
Amari, S.; Nagaoka, H.
\newblock {\em Methods of information geometry}; American Mathematical Society:
  Providence, RI,  2000; pp. x+206.
\newblock Translated from the 1993 Japanese original by Daishi Harada.

\bibitem[Dawid(1975)]{dawid:75}
Dawid, A.P.
\newblock Discussion of a Paper by {B}radley {E}fron.
\newblock {\em Ann. Statist.} {\bf 1975}, {\em 3},~1231--1234.

\bibitem[Dawid(1977)]{dawid:1977AS}
Dawid, A.P.
\newblock Further comments on: ``{S}ome comments on a paper by {B}radley
  {E}fron''\ ({A}nn. {S}tatist. {\bf 3} (1975), 1189--1242).
\newblock {\em Ann. Statist.} {\bf 1977}, {\em 5},~1249.

\bibitem[Pistone and Sempi(1995)]{pistone|sempi:95}
Pistone, G.; Sempi, C.
\newblock An infinite-dimensional geometric structure on the space of all the
  probability measures equivalent to a given one.
\newblock {\em Ann. Statist.} {\bf 1995}, {\em 23},~1543--1561.

\bibitem[Pistone and Rogantin(1999)]{pistone|rogantin:99}
Pistone, G.; Rogantin, M.
\newblock The exponential statistical manifold: mean parameters, orthogonality
  and space transformations.
\newblock {\em Bernoulli} {\bf 1999}, {\em 5},~721--760.

\bibitem[Gibilisco and Pistone(1998)]{gibilisco|pistone:98}
Gibilisco, P.; Pistone, G.
\newblock Connections on non-parametric statistical manifolds by {O}rlicz space
  geometry.
\newblock {\em IDAQP} {\bf 1998}, {\em 1},~325--347.

\bibitem[Gibilisco and Isola(1999)]{gibilisco|isola:1999}
Gibilisco, P.; Isola, T.
\newblock Connections on statistical manifolds of density operators by geometry
  of noncommutative {$L\sp p$}-spaces.
\newblock {\em Infin. Dimens. Anal. Quantum Probab. Relat. Top.} {\bf 1999},
  {\em 2},~169--178.

\bibitem[Cena(2002)]{cena:2002}
Cena, A.
\newblock Geometric structures on the non-parametric statistical manifold.
\newblock PhD thesis, Dottorato in Matematica, Universit\`a di Milano,  2002.

\bibitem[Cena and Pistone(2007)]{cena|pistone:2007}
Cena, A.; Pistone, G.
\newblock Exponential statistical manifold.
\newblock {\em Ann. Inst. Statist. Math.} {\bf 2007}, {\em 59},~27--56.

\bibitem[Imparato(2008)]{imparato:thesis}
Imparato, D.
\newblock Exponential models and {F}isher information. Geometry and
  applications.
\newblock PhD thesis, DIMAT Politecnico di Torino,  2008.

\bibitem[Pistone(2009{\natexlab{a}})]{pistone:2009EPJB}
Pistone, G.
\newblock $\kappa$-exponential models from the geometrical viewpoint.
\newblock {\em The European Physical Journal B Condensed Matter Physics} {\bf
  2009}, {\em 71},~29--37.

\bibitem[Pistone(2009{\natexlab{b}})]{pistone:2009SL}
Pistone, G.
\newblock Algebraic varieties vs. differentiable manifolds in statistical
  models. In {\em Algebraic and Geometric Methods in Statistics}; Gibilisco,
  P.; Riccomagno, E.; Rogantin, M.; Wynn, H.P., Eds.; Cambridge University
  Press,  2009; chapter~21, pp. 339--363.

\bibitem[Pistone(2013)]{pistone:2013GSI2013}
Pistone, G.
\newblock Nonparametric Information Geometry.
\newblock arXiv:1306.0480,  2013.

\bibitem[Bourbaki(1971)]{bourbaki:71variete}
Bourbaki, N.
\newblock {\em Vari\'et\'es differentielles et analytiques. Fascicule de
  r\'esultats / Paragraphes 1 \`a 7}; Number XXXIII in \'El\'ements de
  math\'ematiques, Hermann: Paris,  1971.

\bibitem[Abraham \em{et~al.}(1988)Abraham, Marsden, and
  Ratiu]{abraham|marsden|ratiu:1988}
Abraham, R.; Marsden, J.E.; Ratiu, T.
\newblock {\em Manifolds, tensor analysis, and applications}, second ed.;
  Vol.~75, {\em Applied Mathematical Sciences}, Springer-Verlag: New York,
  1988; pp. x+654.

\bibitem[Lang(1995)]{lang:1995}
Lang, S.
\newblock {\em Differential and {R}iemannian manifolds}, third ed.; Vol. 160,
  {\em Graduate Texts in Mathematics}, Springer-Verlag: New York,  1995; pp.
  xiv+364.

\bibitem[Musielak(1983)]{musielak:1983}
Musielak, J.
\newblock {\em Orlicz spaces and modular spaces}; Vol. 1034, {\em Lecture Notes
  in Mathematics}, Springer-Verlag: Berlin,  1983; pp. iii+222.

\bibitem[Shima(2007)]{shima:2007}
Shima, H.
\newblock {\em The geometry of {H}essian structures}; World Scientific
  Publishing Co. Pte. Ltd., Hackensack, NJ,  2007; pp. xiv+246.

\bibitem[Villani(2002)]{villani:2002review}
Villani, C.
\newblock A review of mathematical topics in collisional kinetic theory. In
  {\em Handbook of mathematical fluid dynamics, {V}ol. {I}}; North-Holland:
  Amsterdam,  2002; pp. 71--305.

\bibitem[Grasselli(2001)]{grasselli:2001}
Grasselli, M.R.
\newblock Dual Connections in Nonparametric Classical Information Geometry.
\newblock arXiv:math-ph/0104031v1,  2001.

\bibitem[Brown(1986)]{brown:86}
Brown, L.D.
\newblock {\em Fundamentals of statistical exponential families with
  applications in statistical decision theory}; Number~9 in IMS Lecture Notes.
  Monograph Series, Institute of Mathematical Statistics: Hayward, CA,  1986;
  pp. x+283.

\bibitem[Grasselli(2010)]{grasselli:2010AISM}
Grasselli, M.R.
\newblock Dual connections in nonparametric classical information geometry.
\newblock {\em Ann. Inst. Statist. Math.} {\bf 2010}, {\em 62},~873--896.

\bibitem[Amari(1998)]{amari:1998natural}
Amari, S.I.
\newblock Natural gradient works efficiently in learning.
\newblock {\em Neural Computation} {\bf 1998}, {\em 10},~251--276.

\bibitem[Malag\`{o} \em{et~al.}(2008)Malag\`{o}, Matteucci, and
  Dal~Seno]{malago|matteucci|dalseno:2008}
Malag\`{o}, L.; Matteucci, M.; Dal~Seno, B.
\newblock An information geometry perspective on estimation of distribution
  algorithms: boundary analysis.
\newblock  GECCO '08: Proceedings of the 2008 GECCO conference companion on
  Genetic and evolutionary computation; ACM: New York, NY, USA,  2008; pp.
  2081--2088.

\bibitem[Malag\`o \em{et~al.}(2009)Malag\`o, Matteucci, and
  Pistone]{malago|matteucci|pistone:2009NIPS}
Malag\`o, L.; Matteucci, M.; Pistone, G.
\newblock Stochastic Relaxation as a Unifying Approach in 0/1 Programming,
  2009.
\newblock NIPS 2009 Workshop on Discrete Optimization in Machine Learning:
  Submodularity, Sparsity \& Polyhedra (DISCML), Dec 11 2009, December 11-12,
  2009, Whistler Resort \& Spa, Canada.

\bibitem[Malag\`{o} \em{et~al.}(2011)Malag\`{o}, Matteucci, and
  Pistone]{malago|matteucci|pistone:2011a}
Malag\`{o}, L.; Matteucci, M.; Pistone, G.
\newblock Towards the geometry of estimation of distribution algorithms based
  on the exponential family.
\newblock  Proceedings of the 11th workshop on Foundations of genetic
  algorithms; ACM: New York, NY, USA,  2011; FOGA '11, pp. 230--242.

\bibitem[Wierstra \em{et~al.}(2011)Wierstra, Schaul, Glasmachers, Sun, and
  Schmidhuber]{Wierstra:arXiv1106.4487}
Wierstra, D.; Schaul, T.; Glasmachers, T.; Sun, Y.; Schmidhuber, J.
\newblock Natural Evolution Strategies.
\newblock arXiv:1106.4487,  2011.

\bibitem[{Arnold} \em{et~al.}(2011){Arnold}, {Auger}, {Hansen}, and
  {Ollivier}]{arnoldetal:2011arXiv}
{Arnold}, L.; {Auger}, A.; {Hansen}, N.; {Ollivier}, Y.
\newblock {Information-Geometric Optimization Algorithms: A Unifying Picture
  via Invariance Principles}.
\newblock arXiv:1106.3708,  2011.

\bibitem[Malag\`o(2012)]{malago:2012thesis}
Malag\`o, L.
\newblock On the geometry of optimization based on the exponential family
  relaxation.
\newblock PhD thesis, Politecnico di Milano,  2012.

\bibitem[Malag\`o \em{et~al.}(2013)Malag\`o, Matteucci, and
  Pistone]{malago|matteucci|pistone:2013CEC}
Malag\`o, L.; Matteucci, M.; Pistone, G.
\newblock Natural Gradient, Fitness Modelling and Model Selection: A Unifying
  Perspective.
\newblock Paper \#1747 IEEE Congress on Evolutionary Computation IEEE CEC 2013
  June 20--23 Canc\'un M\'exico,  2013.

\bibitem[Appell and Zabrejko(1990)]{appell|zabrejko:1990}
Appell, J.; Zabrejko, P.P.
\newblock {\em Nonlinear superposition operators}; Vol.~95, {\em Cambridge
  Tracts in Mathematics}, Cambridge University Press: Cambridge,  1990; pp.
  viii+311.

\bibitem[Ambrosio \em{et~al.}(2008)Ambrosio, Gigli, and
  Savar{\'e}]{ambrosio|gigli|savare:2008}
Ambrosio, L.; Gigli, N.; Savar{\'e}, G.
\newblock {\em Gradient flows in metric spaces and in the space of probability
  measures}, second ed.; Lectures in Mathematics ETH Z\"urich, Birkh\"auser
  Verlag: Basel,  2008; pp. x+334.

\bibitem[Majewski and Labuschagne(2013)]{majewski|labuschagne:2013}
Majewski, W.A.; Labuschagne, L.E.
\newblock On applications of Orlicz Spaces to Statistical Physics.
\newblock {\em Annales Henry Poincar\'e} {\bf 2013}.
\newblock On line first 19 June 2013; also arXiv:1302.3460.

\bibitem[Naudts(2011)]{naudts:2011GTh}
Naudts, J.
\newblock {\em Generalised thermostatistics}; Springer-Verlag London Ltd.:
  London,  2011; pp. x+201.

\bibitem[Kaniadakis(2002)]{kaniadakis:2002PhRE}
Kaniadakis, G.
\newblock Statistical mechanics in the context of special relativity.
\newblock {\em Physical Review E} {\bf 2002}, {\em 66},~056125 1--17.

\bibitem[Kaniadakis(2005)]{kaniadakis:2005PhRE}
Kaniadakis, G.
\newblock Statistical mechanics in the context of special relativity. II.
\newblock {\em Phys. Rev. E} {\bf 2005}, {\em 72},~036108.

\bibitem[Vigelis and Cavalcante(2011)]{vigelis|cavalcante:2011}
Vigelis, R.F.; Cavalcante, C.C.
\newblock On the $\phi$-family of probability distributions.
\newblock {\em Journal of Theoretical Probability} {\bf 2011}.
\newblock Online First.

\bibitem[Ay \em{et~al.}(2013)Ay, Jost, L\^e, and
  Schwachh\"ofer]{ay|jost|le|schwachhofer:2013arXiv1207.6736}
Ay, N.; Jost, J.; L\^e, H.V.; Schwachh\"ofer, L.
\newblock Information geometry and sufficient statistics.
\newblock arXiv:1207.6736,  2013.

\end{thebibliography}

\end{document}